\DeclareMathOperator{\Hom}{{\rm Hom}}
\DeclareMathOperator{\sgn}{{\rm sgn}}
\DeclareMathOperator{\id}{\rm id}
\DeclareMathOperator{\Inf}{\rm Inf}
\renewcommand{\bar}{\overline}
\renewcommand{\tilde}{\widetilde}
\newcommand{\ind}{\uparrow}
\newcommand{\ubar}{\underline}
\newcommand{\tsp}[1]{\textsuperscript{#1}}
\newcommand{\lam}{\lambda}
\newcommand{\roweq}{\sim_{\text{row}}}
\newcommand{\dc}{10}
\theoremstyle{plain}
\newtheorem{thm}{Theorem}[section]
\newtheorem{lem}[thm]{Lemma}
\newtheorem{conj}[thm]{Conjecture}
\newtheorem{repthm}{Theorem}[thm]
\theoremstyle{definition}
\newtheorem{ex}[thm]{Example}
\theoremstyle{remark}
\newtheorem*{rem}{Remark}
\newcommand{\rtbltwo}[4]{\begin{tikzpicture}[baseline={([yshift=-.8ex]current bounding box.center)}]
\draw (0,0)--(0.8,0);
\draw (0,-0.4)--(0.8,-0.4);
\draw (0,-0.8)--(0.8,-0.8);
\node (a1) at (0.2,-0.2){#1};
\node (a2) at (0.6,-0.2){#2};
\node (a3) at (0.2,-0.6){#3};
\node (a4) at (0.6,-0.6){#4};
\end{tikzpicture}}
\begin{document}

\title{\bf Relationships between plethysm coefficients}
\author{Melanie de Boeck}
\date{\today}
\address{School of Mathematics, Statistics \& Actuarial Science, University of Kent, Canterbury, Kent CT2 7NF, UK}
\email{m.deboeck.1@bham.ac.uk}
\curraddr{School of Mathematics, University of Birmingham, Edgbaston, Birmingham, B15 2TT, UK}
\keywords{Plethysm, Specht module, Foulkes module, homomorphism, semistandard tableau}
\subjclass[2010]{20C30 (primary),~20C15,~05E05,~05E10 (secondary)}

\begin{abstract}
We consider the plethysm problem stated for representations of symmetric groups. In particular, we prove new relationships between composition multiplicities of twisted Foulkes modules. Expressed in terms of symmetric functions, our results establish relationships between plethysm coefficients.
\end{abstract}

\maketitle

\thispagestyle{empty}
\section{Introduction}\label{intro}
The plethysm problem is a fundamental question that may be stated for representations of symmetric groups, representations of general linear groups, or symmetric functions.  Our work takes the foremost approach, but we begin by describing the symmetric function setting so that we may outline the historical development of the problem.

Plethysm multiplication of symmetric functions was introduced by Littlewood in~\cite{littlewood} in 1936. Given two partitions $\nu$ and $\mu$ of $n,m\in\mathbb{Z}$, respectively, the plethysm $s_\nu\circ s_\mu$ of the Schur functions $s_\nu$, $s_\mu$ may be expressed as a linear combination of Schur functions with integer coefficients: 
\[s_\nu\circ s_\mu=\sum_{\lam\vdash mn}p^\lam_{\nu,\mu}s_\lam.\]

A complete understanding of the \emph{plethysm coefficients} $p^\lam_{\nu,\mu}$ remains elusive: in his millennium survey~\cite[Problem~9]{stanley}, Stanley lists the task of obtaining a combinatorial description of these coefficients as one of the key open problems in algebraic combinatorics. In the language of representations of symmetric groups, the plethysm problem asks for the composition multiplicities of certain modules for $\mathfrak{S}_{mn}$, which are modules that are induced from wreath products $\mathfrak{S}_m\wr\mathfrak{S}_n$; we shall make this precise later in Equation~\eqref{eq:symm_pleth_problem}.\medskip

The two main results of this paper are new relationships between plethysm coefficients in the case where $\mu=(1^m)$. We use the following notation in the statement of the theorems: given any two partitions $\lam$ and $\theta$, define $\lam+\theta:=(\lam_1+\theta_1,\lam_2+\theta_2,\ldots)$. 

\begin{thm}If \label{pleth_thm1} $\lam$ is a partition of $mn$, then \begin{equation*}p^\lam_{(n),(1^m)}\leq p^{\lam+(1^m)}_{(n+1),(1^m)}.\end{equation*}
\end{thm}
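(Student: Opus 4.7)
The plan is to realise the plethysm coefficient $p^\lambda_{(n),(1^m)}$ as the dimension of a Hom space between a Specht module and a twisted Foulkes module, and then to build an injection of Hom spaces which, by semisimplicity in characteristic zero, yields the desired inequality on multiplicities. Write $H^{(1^m;n)}$ for the $\mathfrak{S}_{mn}$-module induced from the wreath product $\mathfrak{S}_m \wr \mathfrak{S}_n$ acting on $(\sgn_m)^{\boxtimes n}\boxtimes \mathbf{1}_n$; then $p^\lambda_{(n),(1^m)} = \dim\Hom_{\mathfrak{S}_{mn}}(S^\lambda, H^{(1^m;n)})$, and the goal reduces to producing an injective linear map
\[
\Phi:\Hom_{\mathfrak{S}_{mn}}\!\bigl(S^\lambda,\,H^{(1^m;n)}\bigr)\longhookrightarrow\Hom_{\mathfrak{S}_{m(n+1)}}\!\bigl(S^{\lambda+(1^m)},\,H^{(1^m;n+1)}\bigr).
\]

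First I would fix the standard combinatorial model for $H^{(1^m;n)}$, whose basis consists of unordered collections $\{C_1,\ldots,C_n\}$ of signed $m$-columns partitioning $\{1,\ldots,mn\}$, the sign recording the parity of the chosen ordering within each column. A homomorphism $\phi\colon S^\lambda\to H^{(1^m;n)}$ is determined by $\phi(e_T)$ for a fixed $\lambda$-polytabloid $e_T$, subject to row-symmetry and the Garnir (column-antisymmetry) relations, and admits a description through semistandard Foulkes-type tableaux in the style developed earlier in the literature on twisted Foulkes modules. The shape $\lambda+(1^m)$ is obtained from $\lambda$ by prepending a single column of length $m$, and this column-wise description of both $\lambda$ and the Foulkes module is what makes the prepending operation natural.

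Second, I would define $\Phi(\phi)=\tilde\phi$ by shifting all entries of $\lambda$-tableaux up by $m$ and designating a new leftmost column filled with $1,2,\ldots,m$; on the image side we similarly adjoin the signed column $C_0=(1,2,\ldots,m)$ to every basis vector appearing in $\phi(e_T)$. More precisely, if $T'$ is the $(\lambda+(1^m))$-tableau formed by prepending the column $(1,\ldots,m)^t$ to the shifted tableau $T+m$, set $\tilde\phi(e_{T'})=C_0\wedge\phi(e_T)$, where $\wedge$ denotes the obvious signed append map from $H^{(1^m;n)}$ (regarded as acting on $\{m+1,\ldots,m(n+1)\}$) into $H^{(1^m;n+1)}$. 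Injectivity of $\Phi$ is then clear because the append map on basis vectors is injective and $e_{T'}$ is a cyclic generator of $S^{\lambda+(1^m)}$.

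The main obstacle is Step three: verifying that $\tilde\phi$ is actually a well-defined $\mathfrak{S}_{m(n+1)}$-homomorphism. One must check that the formula for $\tilde\phi$ respects the Specht relations associated with the enlarged shape $\lambda+(1^m)$. The Garnir relations that do not involve the new leftmost column translate directly to Garnir relations for $\lambda$ that $\phi$ already satisfies, since the prepended column $C_0$ is fixed and contributes only an overall sign. The Garnir relations between the new column and the next column in $\lambda+(1^m)$ are the delicate ones, and here I expect to exploit the antisymmetry built into the Foulkes module via $\sgn_m$: any element of $C_0$ swapped into another column produces a column sum that, by the wreath-product sign, cancels in pairs, reducing the required identity to one of the row relations for $\lambda$. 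Once this verification is in place, the inequality follows by comparing dimensions of the two Hom spaces.
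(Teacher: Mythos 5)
The overall strategy — prepend an $m$-column to the shape and lift each homomorphism into the twisted Foulkes module — is in the same spirit as the paper, but your implementation leaves the hardest step unproven. The paper (after conjugating so as to work with $\mu=(m)$ rather than $(1^m)$, which is a cosmetic change) works entirely inside the James framework: every map $\ubar\theta_{\tilde T}=\theta_{\tilde T}\circ\phi$ is a homomorphism \emph{by construction}, because $\theta_{\tilde T}$ is a semistandard homomorphism and $\phi$ is a $\mathbb{C}\mathfrak{S}$-map. The entire technical burden is then to show that the lifted maps are nonzero and, across a basis of the Hom space, remain linearly independent. This is the content of Lemma~\ref{existence_lem} and the subsequent linear-independence argument, carried out via a delicate coefficient computation culminating in $\mathcal{C}=(d+1)\mathscr{C}$.

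Your construction inverts the difficulty, and in doing so acquires a genuine gap. You define $\tilde\phi$ on a single polytabloid by $\tilde\phi(e_{T'})=C_0\wedge\phi(e_T)$ and propose to extend by $\mathfrak{S}_{m(n+1)}$-equivariance; the injectivity of the resulting $\Phi$ would indeed be immediate. But you never verify that $\tilde\phi$ is a well-defined module homomorphism — i.e.\ that the Garnir relations of $S^{\lambda+(1^m)}$ that mix the new column with the old ones are annihilated — and you yourself flag this as ``the main obstacle.'' The heuristic you offer, that terms ``cancel in pairs'' by the wreath-product sign and reduce to row relations of $\lambda$, is not substantiated and is actually in tension with what the paper's computation reveals: the correct lifted homomorphism does not simply append a fixed column to $\phi(e_T)$, but rescales each basis coefficient by a factor $d+1$ that depends on the basis element $\mathbf R$ (the number of digits occupying exactly the first $m$ columns of $\mathbf R$). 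This strongly suggests that the naive ``append'' map is \emph{not} a homomorphism at all, so Step three as stated would fail rather than merely requiring more detail. To repair your proposal you would need to replace the bare append by a weighted version matching the $(d+1)$ factors (or, more simply, to adopt the paper's route: take the well-definedness for free from James's $\theta_{\tilde T}$ and prove nonvanishing and linear independence directly).
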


\begin{thm}Let \label{pleth_thm2} $\lam$ be a partition of $mn$ with fewer than $2m$ parts. If $m$ is even, then, for any $a\geq 2$, \begin{equation*}p^\lam_{(n),(1^m)}\leq p^{\lam+(1^{am})}_{(n+a),(1^m)}.\end{equation*}
\end{thm}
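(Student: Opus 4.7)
The plan is to construct an explicit injective linear map between Hom spaces that realises the inequality of multiplicities. Using the standard identification
\[ p^\lambda_{(n),(1^m)} = \dim \Hom_{\mathfrak{S}_{mn}}(S^\lambda, H^{(1^m;n)}), \]
where $H^{(1^m;n)} := \Inf_{\mathfrak{S}_m\wr\mathfrak{S}_n}^{\mathfrak{S}_{mn}}(\sgn^{\otimes n})$ denotes the twisted Foulkes module, together with a parameterisation of such Hom spaces by suitable semistandard tableaux (analogous to the one presumably used in the proof of Theorem \ref{pleth_thm1}), the task is to produce an injection of tableau models from the model for $p^\lam_{(n),(1^m)}$ to the one for $p^{\lam+(1^{am})}_{(n+a),(1^m)}$.

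Concretely, I would send a tableau $T$ of shape $\lam$ in the source model to the tableau $T^+$ of shape $\lam+(1^{am})$ obtained by shifting every entry of $T$ up by $am$ (so that the entries of $T$ become the labels of the ``last $n$'' blocks out of $n+a$) and prepending a single new leftmost column containing $1,2,\ldots,am$ read top-to-bottom. The hypothesis that $\lam$ has fewer than $2m$ parts ensures that in $\lam+(1^{am})$ the new column genuinely extends below the rows of $\lam$: at least $(a-2)m+1$ rows of $\lam+(1^{am})$ have length exactly $1$ (they consist solely of a cell from the new column), so the prepended column is ``free-standing'' at its base and interacts with the body of $T$ only through its top few rows. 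Injectivity of $T\mapsto T^+$ is immediate, since $T$ is recovered by deleting the new column and shifting entries back down.

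The main obstacle is to show that $T^+$ actually encodes a well-defined element of $\Hom_{\mathfrak{S}_{m(n+a)}}(S^{\lam+(1^{am})}, H^{(1^m;n+a)})$. On the Foulkes side, the new column of height $am$ decomposes naturally into $a$ stacked $m$-blocks corresponding to $a$ new copies of $\mathfrak{S}_m$ in the wreath product, and interchanging two such blocks introduces a factor of $(-1)^m$ from the sign twist on each inner $\mathfrak{S}_m$. These signs must be absorbed by the column antisymmetrisation intrinsic to the new column of the Specht module $S^{\lam+(1^{am})}$, and the cancellation succeeds exactly when $m$ is even. This sign issue is vacuous when $a = 1$, which explains why Theorem~\ref{pleth_thm1} holds unconditionally, while the present generalisation requires $m$ even once $a \geq 2$. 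Verifying this sign compatibility, together with the interaction with the Garnir relations of $S^{\lam+(1^{am})}$ at the interface between the new column and the body of $T$, is the combinatorial core of the argument, and I expect it to be the most delicate step.
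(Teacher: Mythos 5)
Your proposal correctly identifies the high-level strategy (inject Hom spaces via a tableau construction) and the role of the two hypotheses, but the actual proof is absent: the two things you label as "the most delicate step" and defer are precisely the entire content of the paper's argument. More importantly, your claim that ``injectivity of $T\mapsto T^+$ is immediate'' rests on a false premise. There is no tableau model that \emph{counts} $p^\lambda_{(n),(1^m)}$: semistandard tableaux only index a \emph{spanning set} $\{\bar\theta_T\}$ or $\{\ubar\theta_T\}$ of the Hom space, many of whose elements can be zero or linearly dependent once one projects from $M^{(m^n)}$ to the (twisted) Foulkes module. Thus injectivity of $T\mapsto T^+$ at the level of tableaux tells you nothing about multiplicities; what is actually needed is that if $\ubar\theta_{T_1},\dots,\ubar\theta_{T_r}$ is a \emph{basis} of $\Hom(S^\lambda,K^{(m^n)})$ then $\ubar\theta_{T_1^+},\dots,\ubar\theta_{T_r^+}$ remain linearly independent. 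That is far from immediate and you do not address it.

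The paper handles exactly this by (i) passing to the conjugate statement (Theorem~\ref{row_am_thm}) via $\omega$, so that the proof lives in the ordinary Foulkes module $H^{(m^n)}$ and the new data is a top row filled $1^m2^m\cdots a^m$ rather than a column; and (ii) proving in Lemma~\ref{existence_lem2} the explicit coefficient identity $\mathcal{C}=a!(d+1)\mathscr{C}$ relating the coefficient of $\bar{\tilde{\mathbf R}}$ in $(e_{\tilde t})\bar\theta_{\tilde T}$ to that of $\bar{\mathbf R}$ in $(e_t)\bar\theta_T$, from which both nonvanishing \emph{and} preservation of linear independence follow (since the scalar $a!(d+1)$ depends only on $\mathbf R$, not on $T$). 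This computation is exactly where the parity of $m$ enters (the coset representatives $v\in V_0$ are products of $m$ transpositions, hence even) and where the hypothesis $\lambda_1<2m$ (equivalently, fewer than $2m$ parts before conjugating) forces the rigid block structure of the new first row of $T''$. Your sketch gestures at both phenomena but identifies neither the precise coset decomposition $\kappa_{\tilde t}=\kappa_t\sum_V\sgn(v)v$ nor the counting argument for $|V_0|=d+1$ that make the bound go through. In short: the construction is roughly right in spirit, but the proposal contains no proof of either of the two facts that need proving.
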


There exists an involution $\omega$ on the ring of symmetric functions (see~\cite[Ch.~I, \S2, Equation~(2.7)]{macdonald}), using which it is possible to obtain further results about plethysm coefficients. In the language of symmetric groups, applying $\omega$ simply corresponds to tensoring with the sign representation. A consequence for the plethysm coefficients is as follows:
\[p^\lam_{\nu,\mu}=\begin{dcases*}p^{\lam'}_{\nu,\mu'}& if $|\mu|$ is even;\\ p^{\lam'}_{\nu',\mu'}& if $|\mu|$ is odd,\end{dcases*}\] where $\lam'$, say, denotes the conjugate partition of $\lam$.\smallskip

Applying $\omega$ to Theorems~\ref{pleth_thm1} and~\ref{pleth_thm2}, we obtain the following equivalent relationships.

\setcounter{thm}{1}
\begin{repthm}Let $\lam=(\lam_1,\lam_2,\ldots,\lam_\ell)$ be a partition of $mn$ and let $q\in\mathbb{N}_0$ be minimal such that $0\leq\lam_{q+1}\leq m$. \label{row_m_thm} Define $\tilde\lam:=(\lam_1,\ldots,\lam_q,m,\lam_{q+1},\ldots,\lam_\ell)$ if $q\neq0$ and $\tilde\lam:=(m,\lam_1,\ldots,\lam_\ell)$ if $q=0$.
\begin{enumerate}
\item If $m$ is even, then $p^\lam_{(n),(m)}\leq p^{\tilde\lam}_{(n+1),(m)}$.
\item If $m$ is odd, then $p^\lam_{(1^n),(m)}\leq p^{\tilde\lam}_{(1^{n+1}),(m)}$.
\end{enumerate}
\end{repthm}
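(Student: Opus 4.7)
My plan is to derive the stated inequalities by applying the $\omega$-involution to Theorem~\ref{pleth_thm1}, combining the translation formula for plethysm coefficients displayed just before the statement with a combinatorial identity for conjugate partitions. I would begin from the inequality $p^{\nu}_{(n),(1^m)}\leq p^{\nu+(1^m)}_{(n+1),(1^m)}$ of Theorem~\ref{pleth_thm1}, applied to a partition $\nu$ of $mn$. Since $|(1^m)|=m$ and $(1^m)'=(m)$, the $\omega$-formula translates this into $p^{\nu'}_{(n),(m)}\leq p^{(\nu+(1^m))'}_{(n+1),(m)}$ when $m$ is even, and $p^{\nu'}_{(1^n),(m)}\leq p^{(\nu+(1^m))'}_{(1^{n+1}),(m)}$ when $m$ is odd (using $(n)'=(1^n)$).

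Next I would establish the combinatorial identity $(\nu+(1^m))'=\nu'\cup(m)$, where $\nu'\cup(m)$ denotes $\nu'$ with one extra part equal to $m$ inserted at the unique position preserving weakly decreasing order. This can be verified directly: setting $\sigma=\nu'\cup(m)$, one computes $\sigma'_i=\nu_i+1$ for $i\leq m$ and $\sigma'_i=\nu_i$ for $i>m$, using the duality $|\{j:\nu'_j\geq i\}|=\nu_i$; this matches $\nu+(1^m)$, and conjugating yields the identity.

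Substituting this identity into the translated inequality and writing $\lam$ in place of $\nu'$ yields both parts of the theorem, since the minimal $q$ with $\lam_{q+1}\leq m$ is exactly the number of parts of $\lam$ strictly exceeding $m$, so $\lam\cup(m)$ is precisely $\tilde\lam$ as defined in the statement. The argument is essentially bookkeeping, so no serious obstacle arises; the only subtleties are confirming that the insertion produces a valid partition (immediate from $\lam_q>m\geq\lam_{q+1}$ by minimality of $q$) and correctly handling the boundary case $q=0$ where $m$ is simply prepended to $\lam$.
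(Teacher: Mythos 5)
The combinatorial content of your proposal is correct: the identity $(\nu+(1^m))'=\nu'\cup(m)$ holds, the $\omega$-formula applies exactly as you describe, and the identification of $\lam\cup(m)$ with $\tilde\lam$ (including the $q=0$ case) is accurate. However, the argument as a whole is circular. You invoke Theorem~\ref{pleth_thm1} as the starting point, but the paper never proves Theorem~\ref{pleth_thm1} independently: the paper's logical structure is the reverse of yours. It proves Theorem~\ref{row_m_thm} directly (Section~\ref{row_m_thm_proof}, via semistandard homomorphisms into the Foulkes and signed Foulkes modules $H^{(m^n)}$ and $K^{(m^n)}$, building on Lemmas~\ref{basis_lem} and~\ref{existence_lem} and the coefficient identity $\mathcal{C}=(d+1)\mathscr{C}$ of Equation~\eqref{coeff_rel}), and then obtains Theorem~\ref{pleth_thm1} as the $\omega$-translate. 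Your proof establishes only that Theorem~\ref{pleth_thm1} and Theorem~\ref{row_m_thm} are equivalent under $\omega$, which the paper already records in the paragraph preceding the theorem statement; it supplies none of the representation-theoretic machinery that actually proves either result.

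To turn this into a genuine proof you would need an independent argument for Theorem~\ref{pleth_thm1}, i.e.\ one that does not pass back through Theorem~\ref{row_m_thm}. The paper chooses to work on the conjugate side because the setting of (signed) Foulkes modules and the surjections $\psi:M^{(m^n)}\to H^{(m^n)}$ and $\phi:M^{(m^n)}\to K^{(m^n)}$ make the key lemma tractable: one shows that if a semistandard homomorphism $\ubar\theta_T:S^\lam\to K^{(m^n)}$ (respectively $\bar\theta_T:S^\lam\to H^{(m^n)}$) is nonzero, then the tableau $\tilde T$ built in \S\ref{m_tableaux} yields a nonzero $\ubar\theta_{\tilde T}:S^{\tilde\lam}\to K^{(m^{n+1})}$ (resp.\ $\bar\theta_{\tilde T}$), and then a linear-independence argument upgrades this to the multiplicity inequality. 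That is the substance your proposal is missing.
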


\setcounter{thm}{2}\setcounter{repthm}{0}
\begin{repthm}Let $\lam=(\lam_1,\ldots,\lam_\ell)$ be a partition of $mn$ such that $0\leq\lam_1<2m$ \label{row_am_thm} and for any $a\geq 2$, define $\tilde\lam:=(am,\lam_1,\ldots,\lam_\ell)$. If $m$ is even, then $p^\lam_{(n),(m)}\leq p^{\tilde\lam}_{(n+a),(m)}$.
\end{repthm}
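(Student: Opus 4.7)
The plan is to prove the restated form (Theorem~\ref{row_am_thm}) directly; the original Theorem~\ref{pleth_thm2} then follows by applying~$\omega$. Working with $\mu=(m)$ rather than $\mu=(1^m)$ gives access to the \emph{untwisted} Foulkes module $H^{(m;n)}:=\mathbf{1}\ind_{\mathfrak{S}_m\wr\mathfrak{S}_n}^{\mathfrak{S}_{mn}}$, whose combinatorial model is particularly clean (basis of set partitions of $\{1,\ldots,mn\}$ into $n$ blocks of size~$m$). Using the standard identification
\[
p^\lam_{(n),(m)}=\dim\Hom_{\mathfrak{S}_{mn}}(S^\lam,H^{(m;n)}),
\]
it suffices to produce an injective linear map between the relevant Hom spaces. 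My approach is to work with a combinatorial basis for $\Hom_{\mathfrak{S}_{mn}}(S^\lam,H^{(m;n)})$ indexed by \emph{semistandard homomorphism tableaux} of shape $\lam$ with appropriate content (as, presumably, will have been set up earlier in the paper), and to construct the injection directly on these tableaux.

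The core construction is an explicit map
\[
\Psi_a:\Hom_{\mathfrak{S}_{mn}}(S^\lam,H^{(m;n)})\hookrightarrow\Hom_{\mathfrak{S}_{m(n+a)}}(S^{\tilde\lam},H^{(m;n+a)})
\]
that sends a basis tableau $T$ of shape $\lam$ to the tableau $\Psi_a(T)$ of shape $\tilde\lam=(am,\lam_1,\ldots,\lam_\ell)$ obtained by prepending a new first row of length $am$, filled from left to right with $a$ fresh labels $n+1,\ldots,n+a$, each appearing in $m$ consecutive cells. Injectivity on the combinatorial level is immediate: $T$ is recovered from $\Psi_a(T)$ by deleting the first row and its new labels.

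The two hypotheses enter as follows. The bound $\lam_1<2m$, together with $am\geq2m$, means that the new first row strictly overhangs every column of the embedded copy of $\lam$ in $\tilde\lam$, and that each $m$-block of a single new label sits above at most one column-block of $T$; this lets the semistandard condition for $\Psi_a(T)$ reduce to the semistandard condition for $T$. The parity hypothesis that $m$ is even controls the signs arising from the Garnir/straightening relations for $S^{\tilde\lam}$: reorderings of the $m$ equal cells bearing the same new label contribute $(-1)^m=+1$, so no sign ambiguity is introduced by the extension, and the image does not collapse.

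The main obstacle I foresee is not the construction of $\Psi_a$ but the verification that it descends to a genuine injection at the level of Hom spaces, not merely of combinatorial indexing sets. The cleanest route would be to prove that each $\Psi_a(T)$ is itself a basis element in the target indexing set, so that the images form a subset of a basis and are automatically linearly independent in $\Hom_{\mathfrak{S}_{m(n+a)}}(S^{\tilde\lam},H^{(m;n+a)})$. The two hypotheses of the theorem are precisely what make this work: without $\lam_1<2m$ the new row could fail to cover all columns of $\lam$ and force a non-trivial straightening, while without $m$ even the signs arising in that straightening could cause collapses. Carrying out this semistandardness check and the bookkeeping with the straightening relations is where I expect the technical heart of the proof to lie.
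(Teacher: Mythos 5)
Your tableau construction is exactly the one the paper uses in \S\ref{am_tableaux}: prepend a new first row of length $am$ filled with $a$ fresh labels, each repeated $m$ times, and shift the old labels up by $a$. You also correctly identify that the delicate point is passing from a combinatorial injection on tableaux to an injection between Hom spaces. However, the route you propose to close that gap does not work, and there is a subsidiary misreading of how the two hypotheses enter.

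The fatal issue is that $\left\{\bar\theta_T \mid T\in\mathcal{T}_0\big(\lam,(m^n)\big)\right\}$ is \emph{not} a basis of $\Hom_{\mathbb{C}\mathfrak{S}_{mn}}\big(S^\lam,H^{(m^n)}\big)$; it is only a spanning set, obtained by composing the genuine basis $\{\theta_T\}$ of $\Hom\big(S^\lam,M^{(m^n)}\big)$ with the surjection $\psi:M^{(m^n)}\to H^{(m^n)}$. The maps $\bar\theta_T$ can vanish or be linearly dependent; indeed \S\ref{foulkes_setting} exhibits the case $\lam=(3,1)$, $\mu=(2^2)$, where the unique semistandard $T$ has $\bar\theta_T=0$. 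Consequently, showing that $\Psi_a(T)$ is semistandard (which is immediate from the construction and requires \emph{neither} $\lam_1<2m$ nor $m$ even) tells you nothing about linear independence of the $\bar\theta_{\Psi_a(T)}$ in the target Hom space. ``The images form a subset of a basis'' is not available: semistandardness does not single out a basis of $\Hom\big(S^{\tilde\lam},H^{(m^{n+a})}\big)$. The paper closes the gap by a direct coefficient computation (Lemma~\ref{existence_lem2}): for any $\mathbf R\in\mathcal{T}\big(\lam,(m^n)\big)$, the coefficient of $\bar{\tilde{\mathbf R}}$ in $(e_{\tilde t})\bar\theta_{\tilde T}$ equals $a!(d+1)$ times the coefficient of $\bar{\mathbf R}$ in $(e_t)\bar\theta_T$, with $d+1>0$ depending only on $\mathbf R$. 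Combined with the observation that $\bar{\mathbf R}\mapsto\bar{\tilde{\mathbf R}}$ sends a basis of $H^{(m^n)}$ to a linearly independent set in $H^{(m^{n+a})}$, this lets one pull any alleged relation $\sum_i\alpha_i\bar\theta_{\tilde T_i}=0$ back to a relation $\sum_i\alpha_i\bar\theta_{T_i}=0$ among maps chosen (via the analogue of Lemma~\ref{basis_lem}) to be linearly independent, forcing $\alpha_i=0$. This coefficient argument is the technical heart you anticipated, but it is a different argument from the one you sketch.

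On the hypotheses: the condition $\lam_1<2m$ is not needed for semistandardness of $\tilde T$ (the new first row always overhangs every column of $\lam$ since $am\geq 2m>\lam_1$). Rather, it is used inside the coefficient calculation to pin down the shape of the first row of any $T''\roweq\tilde T$ for which $\bar{T''\rho}=\bar{\tilde{\mathbf R}}$, forcing $T''=\tilde{T'}*\omega$ for some $T'\roweq T$ and $\omega\in\mathfrak{S}_a$. Similarly, $m$ even is used so that each nontrivial coset representative $v\in V_0$, being a product of $m$ transpositions, has $\sgn(v)=+1$; without that the contributions from $V_0$ could cancel and the coefficient $\mathcal{C}$ could vanish even when $\mathscr{C}\neq 0$. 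Neither hypothesis is about ``Garnir/straightening'' of the new first row.
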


Whilst there exist other relationships between plethysm coefficients (see \S\ref{existing}), our results are novel since they are the first known relationships for which $m$ is fixed. We now reformulate the plethysm problem into the setting that we will use to prove our results. In the sequel, all modules under consideration are right modules. We will work over $\mathbb{C}$ throughout, but the ground field may be replaced by any field of characteristic zero.

Given any $\mathbb{C}\mathfrak{S}_n$-module $X$, we may construct a $\mathbb{C}(\mathfrak{S}_m\wr\mathfrak{S}_n)$-module $\Inf X$ by inflating along the canonical surjection $\mathfrak{S}_m\wr\mathfrak{S}_n\to\mathfrak{S}_n$. When $X$ is a Specht module $S^\nu$ labelled by a partition $\nu$ of $n$, inducing $\Inf S^\nu$ to $\mathfrak{S}_{mn}$ results in a  $\mathbb{C}\mathfrak{S}_{mn}$-module $H_\nu^{(m^n)}=\big(\Inf S^\nu\big)\big\ind_{\mathfrak{S}_m\wr\mathfrak{S}_n}^{\mathfrak{S}_{mn}}$, which we call a \emph{twisted Foulkes module}.  Under the characteristic map, the image of the ordinary character afforded by $H_\nu^{(m^n)}$ is the plethysm $s_\nu\circ s_{(m)}$ (see~\cite[Ch.~I, Appendix~A, Equation~(6.2)]{macdonald}). Moreover, asking for the decomposition of $H^{(m^n)}_\nu$ as a direct sum of irreducible Specht modules is entirely equivalent to the problem of determining plethysm coefficients for the corresponding plethysm. In particular, 
\begin{equation}H^{(m^n)}_\nu=\bigoplus_{\lam\vdash mn}p^\lam_{\nu,(m)}S^\lam.\label{eq:symm_pleth_problem}\end{equation}

The approach that we will take to prove Theorems~\ref{row_m_thm} and~\ref{row_am_thm} is to prove the existence of appropriate homomorphisms $S^\lam\to H_\nu^{(m^n)}$ and then use these homomorphisms to establish a lower bound on $p^\lam_{\nu,(m)}$. In the sequel, we will refer to $H^{(m^n)}:=H^{(m^n)}_{(n)}$ as a \emph{Foulkes module} and $K^{(m^n)}:=H^{(m^n)}_{(1^n)}$ as a \emph{signed Foulkes module}.\medskip

After a survey of existing results about plethysm coefficients in Section~\ref{existing}, we will recall the theory of semistandard homomorphisms in Section~\ref{rep_theory} and indicate how it may be used to study signed Foulkes modules. The techniques that we use are of independent interest: they have never before been employed as a way to study signed Foulkes modules and they provide a practical approach to studying twisted Foulkes modules in general.  Section~\ref{prelim} will be devoted to preliminaries and to defining the tableaux upon which the proofs of our main theorems will rely. In Sections~\ref{row_m_thm_proof} and~\ref{row_am_thm_proof} we will prove Theorems~\ref{row_m_thm} and~\ref{row_am_thm}, respectively, and we conclude with further conjectures about relationships between plethysm coefficients in Section~\ref{conjectures}. 

\section{Background on plethysm coefficients}\label{existing}
As might be expected, many of the existing results on plethysm coefficients have been proved via symmetric functions.  However, valuable contributions have also been made using representation theoretic techniques. The plethysm $s_\nu\circ s_\mu$ has been fully described for particular choices of $\nu$ and $\mu$: $s_{(n)}\circ s_{(2)}$, $s_{(2)}\circ s_{(m)}$ and $s_{(3)}\circ s_{(m)}$ (work by Thrall~\cite{thrall}); and $s_{(1^n)}\circ s_{(2)}$ and $s_{(1^2)}\circ s_{(m)}$ (see~\cite[Ch.~I, \S8]{macdonald}). Additionally, several algorithms have been posed for calculating $p^\lam_{\nu,\mu}$ when $\nu$ is any partition of two, three or four (see \cite{duncan,foulkes} and~\cite[\S3.5]{howe}). There are also results about plethysm coefficients when the partition $\lam$ takes a specific form. For example, minimal and maximal partitions that label Schur functions appearing in the plethysms $s_\nu\circ s_{(m)}$ are determined in~\cite{paget_wildon_twisted}, and when $\lam$ is an even partition or a `hook-like' partition, the coefficients $p^\lam_{\nu,\mu}$ are studied in~\cite{bci,mm} and~\cite{giannelli,langley}, respectively.

We now collect together a few results that can be found scattered throughout the literature. Like the main theorems in this paper, they concern relationships between plethysm coefficients.

\begin{enumerate}[label={\bf\ref*{existing}.\arabic*},ref={\ref*{existing}.\arabic*}]
\item Ikenmeyer~\cite[Proposition~4.3.4]{ikenmeyer}: Let $\lam$ be a partition of $mn$ and let $\theta$ be a partition of $kn$. If $p^\lam_{(n),(m)}>0$ and $p^\theta_{(n),(k)}>0$, then $p^{\lam+\theta}_{(n),(m+k)}>0$.
\item Foulkes' Second Conjecture, proved by Brion in~\cite{brion}: If $\lam$ is a partition of $mn$, then $p^\lam_{(n),(m)}\leq p^{\lam+(n)}_{(n),(m+1)}$.\label{FSC}
\item Newell~\cite{newell}: If $\lam\vdash mn$, then $p^\lam_{(n),(m)}=p^{\lam+(1^n)}_{(1^n),(m+1)}$ and $p^\lam_{(1^n),(m)}=p^{\lam+(1^n)}_{(n),
(m+1)}$. \label{Newell_statement}
\item Foulkes' Conjecture~\cite{foulkes}, most recently proved to hold for $m=5$ in~\cite{cim}: If $m<n$, then $p^\lam_{(n),(m)}\geq p^\lam_{(m),(n)}$ for all partitions $\lam$ of $mn$.
\end{enumerate} 

Whilst~\ref{FSC} and \ref{Newell_statement} were both first proved in the symmetric function setting, alternative proofs can be given in the language of representations of symmetric groups. More significantly, using the latter setting Dent~\cite[Theorem~3.10]{dent} obtains additional information: the plethysm coefficients that are the subject of~\ref{FSC} are in fact equal whenever $\lambda_2\leq m$.

\section{Background material from the representation theory of symmetric groups}\label{rep_theory}

In this section, we remain consistent with the notation given by James in~\cite[\S13]{james}. For a partition $\mu$ of $n$, we let $M^\mu$ denote the Young permutation module, that is the $\mathbb{C}\mathfrak{S}_n$-module spanned by all $\mu$-tabloids $\{t\}$. Recall that we represent a $\mu$-tabloid $\{t\}$ by only drawing lines between the rows of the representative $t$. For a $\mu$-tableau $t$, we denote the column stabiliser of $t$ by $C_t$ and we set $\kappa_t=\sum_{\pi\in C_t} \sgn(\pi)\pi$. The Specht module $S^\mu$ is the submodule of $M^\mu$ that is spanned by all $\mu$-polytabloids $e_t=\{t\}\kappa_t$. Moreover, $S^\mu$ is a cyclic module, generated by any single polytabloid. The Specht modules are particularly important, since $\{S^\mu\mid\mu\vdash n\}$ is a complete set of non-isomorphic, irreducible $\mathbb{C}\mathfrak{S}_n$-modules,~\cite[Theorem~4.12]{james}.

\subsection{Semistandard homomorphisms}\label{sstd_homs}
We now review a second, equivalent description of Young permutation modules, which requires a kind of tableau that is permitted to have repeated entries; to avoid any confusion, we will use capital letters to denote such tableaux. Let $\lambda$ be a partition of $n$ and let $\mu=(\mu_1,\ldots,\mu_k)$ be a composition of $n$. A $\lambda$-tableau $T$ is said to be \emph{of type $\mu$} if for every positive $j\in\mathbb{Z}$, the integer $j$ occurs $\mu_j$ times in $T$; write \[\mathcal{T}(\lambda,\mu):=\{T\mid T\text{ is a }\lambda\text{-tableau of type }\mu\}.\] A tableau $T\in\mathcal{T}(\lambda,\mu)$ is called \emph{semistandard} if the numbers are non-decreasing along rows of $T$ and strictly increasing down the columns of $T$. We write $\mathcal{T}_0(\lambda,\mu)$ to denote the set of semistandard tableaux in $\mathcal{T}(\lambda,\mu)$.

Henceforth, fix a $\lambda$-tableau, $t$. There is a well-defined action of $\mathfrak{S}_n$ on $\lam$-tableaux of type $\mu$.  Take $T\in\mathcal{T}(\lambda,\mu)$. If we let $(i)T$ be the entry in $T$ which occurs in the same position as $i$ occurs in $t$, then we may define the action of $\mathfrak{S}_n$ on $\mathcal{T}(\lambda,\mu)$ by \begin{equation*}(i)(T\pi)=\big(i\pi^{-1}\big)T,\end{equation*} where $T\in \mathcal{T}(\lambda,\mu)$, $\pi\in\mathfrak{S}_n$ and $1\leq i\leq n$. With this action, take $M^\mu$ to be the $\mathbb{C}\mathfrak{S}_n$-module spanned, as a vector space, by $\lambda$-tableaux of type $\mu$. The equivalence of the two definitions of $M^\mu$ is established by the following isomorphism: given a fixed $\lambda$-tableau $t$, the image of $T\in\mathcal{T}(\lambda,\mu)$ under the isomorphism is the $\mu$-tabloid $\{t_T\}$, which is obtained by putting $i$ in row $j$ of $\{t_T\}$ if $(i)T=j$. 

The tableaux $T_1,\,T_2\in\mathcal{T}(\lam,\mu)$ are said to be \emph{row equivalent} if $T_2=T_1\pi$ for some $\pi\in R_t$, the row stabiliser of $t$; write $T_1\roweq T_2$. With this in mind, if $T\in \mathcal{T}(\lam,\mu)$, then there is a well-defined map $\hat\theta_T: M^\lambda\to M^\mu$ (defined by James in~\cite[Definition~13.3]{james}), defined on $\lam$-tabloids by \begin{equation*}\hat\theta_T: \{t\}\longmapsto\sum_{T'\sim_{\rm row} T}T',\end{equation*} which can be extended to a homomorphism by allowing group elements to act. Let $\theta_T\in\Hom_{\mathbb{C}\mathfrak{S}_n}\left(S^\lam,M^\mu\right)$ be the restriction of $\hat\theta_T$ to the Specht module $S^\lam$. If $T$ is a semistandard tableau, then we call $\theta_T$ a \emph{semistandard homomorphism}. 

In~\cite[13.5]{james}, James states that, given $\lam,\,\mu\vdash n$, a column of $T\in\mathcal{T}(\lam,\mu)$ contains two identical numbers if and only if $T\kappa_t=0$. So, since 
\begin{equation*}(e_t)\theta_T=\left(\{t\}\kappa_t \right)\theta_T=(\{t\})\theta_T\kappa_t=\left(\sum\nolimits_{T'\roweq T}T'\right)\kappa_t=\sum\nolimits_{T'\roweq T}(T'\kappa_t),
\end{equation*}
it is clear that sometimes $\theta_T$ is the zero map. However, by restricting our attention to semistandard tableaux, we are able to guarantee that the corresponding semistandard homomorphisms are non-zero. A basis for $\Hom_{\mathbb{C}\mathfrak{S}_n}\left(S^\lambda,M^\mu\right)$ is given by $\left\{\theta_T\,\,\big\vert\,\,T\in \mathcal{T}_0(\lambda,\mu)\right\}$; this is~\cite[Theorem~13.13]{james}.

\begin{ex}\label{sstd_hom_ex} Take $\lambda=(3,1)$ and $\mu=(2^2)$. If $t=\young(123,4)$ and $T=\young(112,2)$, then \linebreak$\kappa_t=1-(1\,\,4)$ and \allowdisplaybreaks\begin{eqnarray*} \theta_T:e_t &\longmapsto&\left(\,\young(112,2)+\young(121,2)+\young(211,2)\,\right)\kappa_t\\
&=&\young(112,2)+\young(121,2)-\young(212,1)-\young(221,1)\\
&=&\rtbltwo{$1$}{$2$}{$3$}{$4$}+\rtbltwo{$1$}{$3$}{$2$}{$4$}-\rtbltwo{$2$}{$4$}{$1$}{$3$}-\rtbltwo{$3$}{$4$}{$1$}{$2$}\;.
\end{eqnarray*}\end{ex}

The case $\mu=(m^n)$ will be particularly important to us. In this case, we define an action $T*\sigma$ of $\sigma\in\mathfrak{S}_n$ on $T\in \mathcal{T}\big(\lambda,(m^n)\big)$ by $j\mapsto (j)\sigma$, which serves to `relabel' the entries of $T$.

\subsection{The setting for studying Foulkes modules}\label{foulkes_setting}
One way to study the structure of Foulkes modules is to look for maps from the Specht module $S^\lam$ into $H^{(m^n)}$, where $\lam\vdash mn$. In particular, if we find such a non-zero map, then we have identified $S^\lam$ as a composition factor of $H^{\left(m^n\right)}$ and -- equivalently -- verified that $p^\lam_{(n),(m)}\neq 0$. 

Observe that there is a natural surjection $\psi$ from $M^{\left(m^n\right)}$ to $H^{\left(m^n\right)}$, defined on $\left(m^n\right)$-tabloids by mapping $\{t\}$ to the set partition consisting of the $n$ sets whose elements are the entries of the $n$ rows of $\{t\}$.  Therefore, we may construct a map $\bar\theta_T:S^\lam\to H^{\left(m^n\right)}$ by composing a semistandard homomorphism $\theta_T$ with the surjection $\psi:M^{\left(m^n\right)}\to H^{\left(m^n\right)}$. In fact, since $\mathbb{C}\mathfrak{S}_{mn}$ is semisimple and $\{\theta_T\mid T\in\mathcal{T}_0(\lam,\mu)\}$ is a basis of $\Hom_{\mathbb{C}\mathfrak{S}_{mn}}\left(S^\lambda,M^{(m^n)}\right)$, all $\mathbb{C}\mathfrak{S}_{mn}$-homomorphisms from $S^\lambda$ to $H^{\left(m^n\right)}$ will be linear combinations of the maps $\bar\theta_T=\theta_T\circ\psi$.

Computationally, it is often easier to work completely with the description of $M^{(m^n)}$ in terms of $\lam$-tableaux of type $(m^n)$, thus avoiding the need to involve $(m^n)$-tabloids in the calculations. Let $\bar T$ denote the image of the tableau $T\in M^{(m^n)}$ under $\psi$. Observe that the two $\lam$-tableaux of type $(m^n)$, say $T_1$ and $T_2$, are \emph{equivalent under $\psi$} (i.e. $\bar{T_1}=\bar{T_2}$) if there exists a relabelling permutation $\pi\in\mathfrak{S}_n$ such that $T_1*\pi=T_2$. This says that entries in two equivalent tableaux will have the same \emph{pattern}. For example, if \begin{equation*}t=\young(123,4),\quad T_1=\young(112,2)\quad\text{and}\quad T_2=\young(221,1),\end{equation*} so that $T_1*(1\,2)=T_2$, then the $(2^2)$-tabloids corresponding to $T_1$ and $T_2$ are \begin{equation*}\rtbltwo{1}{2}{3}{4}\quad\text{and}\quad\rtbltwo{3}{4}{1}{2},\end{equation*} respectively, and $\bar{T_1}=\bar{T_2}$. We should note that if two tableaux $T_1,T_2\in\mathcal{T}_0\big(\lam,(m^n)\big)$ are equivalent under $\psi$, then this will be true regardless of the choice of $t$ because $t$ just serves as a labelling tableau. 

Unfortunately, it is rarely obvious whether the composition of $\theta_T$ and $\psi$ will yield a non-zero map. Indeed, if we compose the map $\theta_T$ from Example~\ref{sstd_hom_ex} with $\psi$, then all terms in the image of $\bar\theta_T$ will cancel. Since the chosen $T$ is the only semistandard $(3,1)$-tableau of type $(2^2)$, the conclusion is that $S^{(3,1)}$ is not a composition factor of $H^{(2^2)}$. Hence, using this method, it is seldom a trivial task to determine whether or not a Specht module is a composition factor of a Foulkes module.

\subsection{The setting for studying signed Foulkes modules}\label{signed_setting}
The setting for signed Foulkes modules is very similar. Let $\mu$ be a partition of $n$. Recall from Fulton~\cite[\S7.4]{fulton} that $\tilde M^\mu$ is the vector space spanned by all \emph{oriented $\mu$-column tabloids} $|t|$, corresponding to $\mu$-tableaux $t$. This means that if $\sigma\in C_t$, then $|t\sigma|=\sgn(\sigma)|t|$.

Take $\mu=(1^n)$. The signed Foulkes module $K^{(m^n)}$ is the vector space spanned by all oriented $(1^n)$-column tabloids \begin{equation*}|X|=\begin{vmatrix}X_1\\X_2\\\vdots\\X_n\end{vmatrix}\end{equation*} such that the entries of $X$ are disjoint sets $X_i$, each of size $m$, and $\bigcup_{i=1}^n X_i=\{1,2,\ldots,mn\}$. The symmetric group $\mathfrak{S}_{mn}$ acts in the obvious way, permuting $1,2,\ldots,mn$. If a permutation in $\mathfrak{S}_{mn}$ has the effect of swapping exactly two of the sets $X_i$ within the oriented column tabloid, then the resulting element has the opposite orientation and so differs from the original one only by a sign.

Analogous to $\psi:M^{(m^n)}\to H^{(m^n)}$, there is a well-defined, surjective map ${\phi:M^{(m^n)}\to K^{(m^n)}}$, which is defined on $(m^n)$-tabloids by \begin{equation*}
\{t\}=\begin{array}{ccc}\hline
x_1& \ldots &x_m \\\hline
x_{m+1}&\ldots & x_{2m}\\\hline
&\vdots& \\\hline
x_{(n-1)m+1}&\ldots&x_{nm}\\\hline
\end{array}\longmapsto \begin{vmatrix}\{x_1,\ldots,x_m\}\\\{x_{m+1},\ldots,x_{2m}\}\\\vdots\\\{x_{(n-1)m+1},\ldots,x_{nm}\}\end{vmatrix}.
\end{equation*}
Therefore, for any Specht module $S^\lam$, we may construct a homomorphism $\ubar\theta_T: S^\lam \to K^{(m^n)}$ by composing a semistandard homomorphism $\theta_T:S^\lam\to M^{(m^n)}$ with $\phi$, and pose the question: is $\ubar\theta_T$ a non-zero homomorphism? As in the Foulkes module setting, all $\mathbb{C}\mathfrak{S}_{mn}$-homomorphisms $\ubar\theta_T:S^\lam\to K^{(m^n)}$ arise in this way.

It is also appropriate to use the alternative description of $M^{(m^n)}$ in this setting and thus work entirely with $\lam$-tableaux of type $(m^n)$. Let $\ubar T$ denote the image of a tableau $T\in M^{(m^n)}$ under $\phi$. If $t$ is the fixed labelling tableau, then $\ubar T=|X|$, where $|X|$ is the oriented column tabloid whose entries are (in order) the sets $X_i=\{x\mid (x)T=i\}$. For example, if $t$ and $T$ are as in Example~\ref{sstd_hom_ex}, then $(1)T=(2)T=1$ and $(3)T=(4)T=2$ and so\begin{equation*}
\phi:T\longmapsto \begin{vmatrix}\{1,2\}\\ \{3,4\}\end{vmatrix}.
\end{equation*}
Working with this description of $M^{(m^n)}$, we need to pay attention to more than just the pattern of the entries in the tableaux; given $T_1,T_2\in\mathcal{T}\big(\lam,(m^n)\big)$ such that $T_1*\pi=T_2$ for $\pi\in \mathfrak{S}_n$, we must also record the sign of the permutation $\pi$. Indeed, swapping two rows in the $(m^n)$-tabloid yields, under $\phi$, two elements of $K^{(m^n)}$ which differ by a sign. For example, \begin{equation*}\left(\rtbltwo{1}{2}{3}{4}\right)\phi=\begin{vmatrix}
\{1,2\} \\
\{3,4\} \\
\end{vmatrix}=-\begin{vmatrix}
\{3,4\} \\
\{1,2\} \\
\end{vmatrix}=-\left(\rtbltwo{3}{4}{1}{2}\right)\phi.\end{equation*} In general this means that if $T_1*\pi=T_2$, then $\sgn(\pi)(T_1)\phi=(T_2)\phi$. If we reconsider Example~\ref{sstd_hom_ex}, this time composing $\theta_T$ with $\phi$, then we see that \begin{equation*}(e_t)\ubar\theta_T=2\left(\begin{vmatrix}
\{1,2\} \\
\{3,4\} \\
\end{vmatrix}+\begin{vmatrix}
\{1,3\} \\
\{2,4\} \\
\end{vmatrix}\right).\end{equation*}
In other words, we have found a non-zero homomorphism and thus we can conclude that $S^{(3,1)}$ is a summand of $K^{(2^2)}$. In fact, $K^{(2^2)}\cong S^{(3,1)}$.

\section{Preliminaries}\label{prelim}
We begin this section by introducing some definitions and notation concerning tableaux which we will use throughout the remainder of this work. 

If $\lam$ is a partition of $n$ and $t$ is any $\lam$-tableau, then define $t_i^{(j)}$ to be the entry of $t$ in the $i$\tsp{th} row and $j$\tsp{th} column. Further, define \begin{equation*}C_t^{(j)}:=\mathfrak{S}_{\left\{t_1^{(j)},t_2^{(j)},\ldots,t_{\ell_j}^{(j)}\right\}},\end{equation*} where $\ell_j$ denotes the number of entries in column $j$ of $t$, so that we may write the column stabiliser of $t$ as $C_t=C_t^{(1)}\times C_t^{(2)}\times\ldots\times C_t^{(\lam_1)}$, where $\lam_1$ is the first part of $\lam$. Furthermore, $\kappa_t=\prod_{j=1}^{\lam_1}\kappa_t^{(j)}$, where $\kappa_t^{(j)}:=\sum_{\pi\in C_t^{(j)}} \sgn(\pi)\pi$.  Similarly, for $T\in\mathcal{T}(\lam,\mu)$, where $\mu$ is a composition of $n$, let $T_i^{(j)}$ be the entry in the $i$\tsp{th} row of the $j$\tsp{th} column of $T$ and denote the $j$\tsp{th} column of $T$ by $T^{(j)}$.

\subsection{Tableaux for Theorem~\ref{row_m_thm}}\label{m_tableaux}
When we come to prove part 1 of Theorem~\ref{row_m_thm}, we will show (under the assumptions of the theorem) that if $\left(\bar\theta_T:S^{\lam} \rightarrow H^{(m^n)}\right)\neq 0$ for some $\lambda$-tableau $T$ of type $(m^n)$, then $\left(\bar\theta_{\tilde T}:S^{\tilde\lam} \rightarrow H^{(m^{n+1})}\right)\neq 0$, where $\tilde T$ is an appropriately chosen $\tilde\lam$-tableau of type $(m^{n+1})$. The proof of part 2 of Theorem~\ref{row_m_thm} will proceed similarly. For now, we will present a candidate for $\tilde T$.

Suppose that $\lam=(\lam_1,\ldots,\lam_\ell)$ is a partition of $mn$ and let $T$ be a $\lam$-tableau of type $(m^n)$. Define $\tilde T$ in the following way: \begin{equation}\tilde T_i^{(j)}:=\begin{dcases*}T_i^{(j)}&if $i\leq q$;\\ n+1 &if $i=q+1$ and $j\in\{1,2,\ldots,m\}$;\\ T_{i-1}^{(j)} &if $i>q+1$. \end{dcases*}\label{tildeT}\end{equation} 
If $T$ is semistandard, then the construction of $\tilde T$ ensures that $\tilde T$ has distinct entries in columns, and that the entries are non-decreasing along rows.  However, $\tilde T$ is certainly not semistandard in general. Take $t$ to be the $\lam$-tableau which has the digits $1,2,\ldots,mn$ in increasing order along rows. Define the labelling tableau $\tilde t$ by \begin{equation*}\tilde t_i^{(j)}:=\begin{dcases*}t_i^{(j)}&if $i\leq q$;\\ mn+j &if $i=q+1$ and $j\in\{1,2,\ldots,m\}$;\\ t_{i-1}^{(j)}&if $i>q+1$.\end{dcases*}\end{equation*}

We illustrate the construction of $\tilde T$ and $\tilde t$ in the following example.
\begin{ex} Let $\lam=(5,1)$ and let $m=3,n=2$.\label{ex:construction} If we take 
\begin{equation*}T=\young(11122,2)\quad\text{and}\quad t=\young(12345,6),\end{equation*} then 
\begin{equation*}\tilde T=\young(11122,333,2)\quad\text{and}\quad \tilde t=\young(12345,789,6).\end{equation*}
\end{ex}

With these choices of tableaux, we are able to rewrite the signed column sum $\kappa_{\tilde t}$. Taking explicit coset representatives $y_i^{(j)}$ of $C_t^{(j)}$ in $C_{\tilde t}^{(j)}$, where, for $1\leq j\leq m$, $y_i^{(j)}$ is defined to be the transposition $y_i^{(j)}:=\left(\tilde t_i^{(j)}\,\,\,mn+j\right)$ if $i\neq q+1$ and the identity permutation if $i=q+1$, we have that $\kappa_{\tilde t}=\kappa_t\prod_{j=1}^m\left(\sum_{i=1}^{\ell_j+1}\sgn\left(y_i^{(j)}\right)y_i^{(j)}\right)$. For convenience later, we define 
\begin{equation}\label{y_set}Y:=\left\{\left.\prod\nolimits_{j=1}^m y_{x_j}^{(j)}\;\right\vert\; x_j\in\{1,2,\ldots,\ell_j+1\} \text{ for all } 1\leq j\leq m\right\}.
\end{equation}

\subsection{Tableaux for Theorem~\ref{row_am_thm}}\label{am_tableaux}

We also require a candidate $\tilde T$ for the $\tilde\lam$-tableau of type $(m^{n+a})$ needed for Theorem~\ref{row_am_thm}. Given a $\lam$-tableau $T$ of type $(m^n)$, define a $\tilde\lam$-tableau $\tilde T$ of type $(m^{n+a})$ in the following way: 
\begin{equation*}\tilde T_i^{(j)}:=\begin{dcases*}\gamma &if $i=1$ and $j\in\{(\gamma-1)m+1,(\gamma-1)m+2,\ldots,\gamma m\}$ with $1\leq \gamma\leq a$;\\ T_{i-1}^{(j)}+a &if $i>1$. \end{dcases*}
\end{equation*} 
If $T$ is semistandard, then the construction of $\tilde T$ ensures that $\tilde T$ is also semistandard. Again, take $t$ to be the $\lam$-tableau which has the digits $1,2,\ldots,mn$ in increasing order along rows. Define $\tilde t$ to be the labelling tableau \begin{equation*}\tilde t_i^{(j)}:=\begin{dcases*} mn+j &if $i=1$ and $j\in\{1,2,\ldots,am\}$;\\ t_{i-1}^{(j)}&if $i>1$.\end{dcases*}\end{equation*}

\begin{ex} Let $\lam=(3,1)$, $m=2$, $n=2$ and let $T$, $t$ be as in Example~\ref{sstd_hom_ex}. If $a=3$, then \begin{equation*}\tilde T=\young(112233,445,5)\quad\text{and}\quad \tilde t=\young(56789\dc,123,4).\end{equation*}\smallskip
\end{ex}

Just as in \S\ref{m_tableaux}, we write $\kappa_{\tilde t}=\kappa_t\prod_{j=1}^{\lambda_1}\left(\sum_{i=1}^{\ell_j+1}\sgn\left(v_i^{(j)}\right)v_i^{(j)}\right),$ where, for any $1\leq j\leq \lam_1$, the coset representative $v_i^{(j)}$ of $C_t^{(j)}$ in $C_{\tilde t}^{(j)}$ is defined to be the transposition $v_i^{(j)}:=\left(\tilde t_i^{(j)}\,\,\,mn+j\right)$ if $i\neq 1$ and the identity permutation if $i=1$. For convenience later, we define \begin{equation}\label{y_set2}V:=\left\{\left.\prod\nolimits_{j=1}^{\lambda_1} v_{x_j}^{(j)}\;\right\vert\; x_j\in\{1,2,\ldots,\ell_j+1\} \text{ for all } 1\leq j\leq \lambda_1\right\}.\end{equation}

\section{Proof of Theorem~\ref{row_m_thm}}\label{row_m_thm_proof}
We will prove part 2 of Theorem~\ref{row_m_thm} and then indicate how the proof should be modified in order to prove part 1. We make use of the following lemma.

\begin{lem}\label{basis_lem}
If $S^\lam$ appears in $K^{(m^n)}$ with multiplicity $r\geq 0$, then there exist tableaux \linebreak$T_1,\ldots, T_r\in \mathcal{T}_0\big(\lam,(m^n)\big)$ such that \begin{equation*}\big\{\ubar{\theta}_{T_1},\ldots, \ubar{\theta}_{T_r}\;\big\vert\;\ubar{\theta}_{T_i}:S^\lam\to K^{(m^n)}\;\forall\; 1\leq i\leq r\big\}\end{equation*} is a basis for $\Hom_{\mathbb{C}\mathfrak{S}_{mn}}\big(S^\lam,K^{(m^n)}\big)$.
\end{lem}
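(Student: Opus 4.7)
The plan is to reduce the claim to a routine linear-algebra extraction from a known spanning set. First, I would invoke James's Theorem~13.13 (quoted in \S\ref{sstd_homs}) to get that $\{\theta_T\mid T\in\mathcal{T}_0(\lam,(m^n))\}$ is a basis of $\Hom_{\mathbb{C}\mathfrak{S}_{mn}}(S^\lam,M^{(m^n)})$. Post-composition with the surjection $\phi:M^{(m^n)}\twoheadrightarrow K^{(m^n)}$ gives a linear map
\[
\phi_*:\Hom_{\mathbb{C}\mathfrak{S}_{mn}}(S^\lam,M^{(m^n)})\longrightarrow\Hom_{\mathbb{C}\mathfrak{S}_{mn}}(S^\lam,K^{(m^n)}),\qquad \theta_T\longmapsto\ubar{\theta}_T.
\]

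Next, I would verify that $\phi_*$ is surjective. Since $\mathbb{C}\mathfrak{S}_{mn}$ is semisimple, the short exact sequence $0\to\ker\phi\to M^{(m^n)}\xrightarrow{\phi} K^{(m^n)}\to 0$ splits, so any $f\in\Hom(S^\lam,K^{(m^n)})$ lifts through $\phi$. (This is exactly the assertion already noted in \S\ref{signed_setting} that every homomorphism $S^\lam\to K^{(m^n)}$ arises as some $\ubar{\theta}_T$.) Therefore the family $\{\ubar{\theta}_T\mid T\in\mathcal{T}_0(\lam,(m^n))\}$ spans $\Hom_{\mathbb{C}\mathfrak{S}_{mn}}(S^\lam,K^{(m^n)})$.

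Finally, by Schur's lemma and the decomposition of $K^{(m^n)}$ into simples, $\dim_{\mathbb{C}}\Hom_{\mathbb{C}\mathfrak{S}_{mn}}(S^\lam,K^{(m^n)})=r$. From any finite spanning set of an $r$-dimensional vector space one can extract a subset of size $r$ that is a basis, so there exist $T_1,\dots,T_r\in\mathcal{T}_0(\lam,(m^n))$ with $\{\ubar{\theta}_{T_1},\dots,\ubar{\theta}_{T_r}\}$ a basis, as required.

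There is essentially no hard step: the only thing to be careful about is making the surjectivity of $\phi_*$ explicit (which relies on semisimplicity of $\mathbb{C}\mathfrak{S}_{mn}$), and noting that the resulting spanning family is indexed exactly by the semistandard tableaux, so the extracted basis can indeed be chosen to consist of maps of the form $\ubar{\theta}_T$ with $T$ semistandard, rather than arbitrary linear combinations thereof.
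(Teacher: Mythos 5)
Your proof is correct and follows essentially the same route as the paper: start from James's basis $\{\theta_T\mid T\in\mathcal{T}_0(\lam,(m^n))\}$ of $\Hom_{\mathbb{C}\mathfrak{S}_{mn}}(S^\lam,M^{(m^n)})$, observe that post-composition with $\phi$ turns it into a spanning set of the $r$-dimensional space $\Hom_{\mathbb{C}\mathfrak{S}_{mn}}(S^\lam,K^{(m^n)})$, and prune to a basis. The only difference is that you spell out why $\phi_*$ is surjective (semisimplicity of $\mathbb{C}\mathfrak{S}_{mn}$, hence the short exact sequence splits), whereas the paper simply cites this as already established in \S\ref{signed_setting}; that extra care is harmless and arguably welcome.
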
 

\begin{proof} Since $\left\{\theta_T\mid T\in\mathcal{T}_0\big(\lam,(m^n)\big)\right\}$ is a basis for $\Hom_{\mathbb{C}\mathfrak{S}_{mn}}\big(S^\lam,M^{(m^n)}\big)$, $\left\{\ubar\theta_T\mid T\in\mathcal{T}_0\big(\lam,(m^n)\big)\right\}$ spans $\Hom_{\mathbb{C}\mathfrak{S}_{mn}}\big(S^\lam,K^{(m^n)}\big)$. Pruning the spanning set yields a basis. Moreover, since we assumed that $\dim\Hom_{\mathbb{C}\mathfrak{S}_{mn}}\big(S^\lam,K^{(m^n)}\big)=r$, the basis elements will be labelled by tableaux $T_1,\ldots,T_r\in\mathcal{T}_0\big(\lam,(m^n)\big)$.\end{proof}

The next lemma is sufficient to prove existence of $S^{\tilde\lam}$ as a composition factor in $K^{(m^{n+1})}$. Recall that we define $\tilde t$ and $\tilde T$ as in \S\ref{m_tableaux}.

\begin{lem}\label{existence_lem} Under the assumptions of part 2 of Theorem~\ref{row_m_thm}, if $\left(\ubar\theta_T:S^\lam\to K^{(m^n)}\right)\neq 0$ for some tableau $T\in \mathcal{T}_0\big(\lam,(m^n)\big)$, then $\big(\ubar\theta_{\tilde T}:S^{\tilde \lam}\to K^{(m^{n+1})}\big)\neq 0$.
\end{lem}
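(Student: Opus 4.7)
The plan is to compute $(e_{\tilde t})\ubar{\theta}_{\tilde T}$ explicitly via the factorisation $\kappa_{\tilde t}=\kappa_t\cdot\sum_{y\in Y}\sgn(y)\,y$ from \S\ref{m_tableaux} and to exhibit a summand that is not cancelled. By $\mathbb C\mathfrak S_{m(n+1)}$-equivariance of $\phi$,
\[(e_{\tilde t})\ubar{\theta}_{\tilde T}=\sum_{y\in Y}\sgn(y)\sum_{T'\roweq\tilde T}\big(T'\kappa_t\cdot y\big)\phi.\]
Because row $q+1$ of $\tilde T$ is the constant word $(n+1,\ldots,n+1)$, the row-equivalents of $\tilde T$ are in bijection with those of $T$ under the operation of inserting a row of $n+1$'s at position $q+1$; and since $\kappa_t$ fixes the entries $mn+1,\ldots,mn+m$ of $\tilde t$, this insertion commutes with $\kappa_t$. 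Hence the $y=\id$ summand equals $\Phi\big((e_t)\ubar{\theta}_T\big)$, where $\Phi\colon K^{(m^n)}\to K^{(m^{n+1})}$ is the injective linear map appending $\{mn+1,\ldots,mn+m\}$ as a block to each oriented column tabloid; by hypothesis this leading contribution is non-zero.

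Next I would analyse the remaining summands. For $y\neq\id$ at least one transposition $y_{x_j}^{(j)}$ swaps $mn+j$ with $\tilde t_{x_j}^{(j)}\in\{1,\ldots,mn\}$. Tracking the effect through $\phi$, the block containing the value $n+1$ in the resulting oriented column tabloid then contains some element of $\{1,\ldots,mn\}$, so generically the summand lands outside the image $\Phi(K^{(m^n)})$ and cannot interfere with the leading term. The exceptional case is when some other block of the leading partition happens to coincide with $\{\tilde t_{x_1}^{(1)},\ldots,\tilde t_{x_m}^{(m)}\}$, in which case a non-leading summand could contribute to the same basis vector of $K^{(m^{n+1})}$ as the leading one; such contributions must be shown either to pair up with opposite signs under $\sgn(y)$, or to be ruled out entirely by the semistandardness of $T$.

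The principal obstacle is precisely this exceptional-case analysis. The strategy to overcome it is to work with a distinguished basis element, for instance $\Phi(P_0)$ where $P_0=(T)\phi$ arises from the ``canonical'' term $T''=T$ with $\pi=\id$ in $(e_t)\ubar{\theta}_T$, and to verify directly that no $y\neq\id$ produces $\Phi(P_0)$. The strict increase of entries down each of the first $m$ columns of $T$ forces the $m$ positions of any fixed value in $T$ to have a semistandard arrangement, which should prevent the set $\{\tilde t_{x_1}^{(1)},\ldots,\tilde t_{x_m}^{(m)}\}$ (with at least one $x_j\neq q+1$) from matching a block of $P_0$. Combined with the explicit description of $Y$ from \S\ref{m_tableaux}, this yields that the coefficient of $\Phi(P_0)$ in $(e_{\tilde t})\ubar{\theta}_{\tilde T}$ is exactly the one inherited from the leading summand, which is non-zero, completing the proof.
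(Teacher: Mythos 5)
Your overall framework — factor $\kappa_{\tilde t}=\kappa_t\sum_{y\in Y}\sgn(y)y$, identify the $y=\id$ contribution with $\Phi\bigl((e_t)\ubar\theta_T\bigr)$, and analyse the $y\neq\id$ terms — is the right starting point, but the key step goes wrong, and the symptom is visible: your argument never uses the hypothesis that $m$ is odd, which is essential here.

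The claim ``no $y\neq\id$ produces $\Phi(P_0)$'' is what you would need, and it is false in general. For $y=y_\beta$ to contribute a term landing on $\Phi(\ubar{\mathbf R})$ one needs the digit $\beta\in\{1,\ldots,n\}$ to occur in precisely columns $1,\ldots,m$ of $\tilde{T'\pi}$; the number of such $\beta$ is a quantity $d$ determined by $\mathbf R$, and there is nothing in the semistandardness of $T$ that forces $d=0$. (Indeed, semistandardness makes such a $\beta$ \emph{more} likely to exist in low rows, not less.) What actually saves the argument is a sign computation: $\sgn(y_\beta)=(-1)^m=-1$ since $m$ is odd, and when you translate $\tilde{T'\pi}*((n+1)\;\beta)=\tilde{\mathbf R}*\tau$ back into a relabelling condition $T'\pi=\mathbf R*\sigma$ with $\sigma=\tau((n+1)\;\beta)$, you pick up a compensating $\sgn(\sigma)=-\sgn(\tau)$. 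The two minus signs cancel, so the $y\neq\id$ contributions come in with the \emph{same} sign as the $y=\id$ contribution and the coefficient becomes $\mathcal C=(d+1)\mathscr C$ rather than something that might vanish. Without tracking this sign cancellation you cannot conclude; trying instead to show the $y\neq\id$ terms are absent is attacking a false statement.

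There is a second, smaller gap: you propose to take as your distinguished basis element $P_0=(T)\phi$, the image of the ``canonical'' term. But you have not shown that $P_0$ actually occurs with non-zero coefficient in $(e_t)\ubar\theta_T$; the terms $T'\pi=T*\sigma$ can cancel among themselves. The paper sidesteps this by choosing an \emph{arbitrary} basis element $\ubar{\mathbf R}$ that is guaranteed (by $\ubar\theta_T\neq 0$) to have non-zero coefficient, and showing the scaling $\mathcal C=(d+1)\mathscr C$ for that $\mathbf R$. You should do the same rather than committing to $P_0$.
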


\begin{proof} Assume that $\left(\ubar\theta_T:S^{\lam} \rightarrow K^{(m^n)}\right)\neq 0$. Since $S^\lam$ is a cyclic module with generator $e_t$, it follows that $(e_t)\ubar\theta_T\neq 0$. Pick any basis element $\ubar{\mathbf R}$ appearing in $(e_t)\ubar\theta_T$ with non-zero coefficient. Since $\phi$ is surjective, there exists ${\mathbf R}\in \mathcal{T}\big(\lam,(m^n)\big)$ such that $\phi:{\mathbf R}\mapsto \ubar{\mathbf R}$. We may write 
\begin{equation*} (e_t)\ubar\theta_T=\sum_{T'\roweq T}\ubar{T'\kappa_t}= \sum_{\substack{T'\roweq T,\\ \pi\in C_t}}\sgn(\pi)\ubar{T'\pi}
\end{equation*} and the coefficient $\mathscr{C}$ of $\ubar{\mathbf R}$ in $(e_t)\ubar\theta_T$ is then
\begin{equation}\mathscr{C}=\sum_{\substack{T'\roweq T,\\ \pi\in C_t,\,\sigma\in \mathfrak{S}_n:\\ T'\pi=\mathbf{R}*\sigma}}\sgn(\pi)\sgn(\sigma)\neq0.\label{coeff1}
\end{equation}
In the same way, we may obtain an expression for the coefficient $\mathcal{C}$ of $\ubar{\tilde{\mathbf R}}$ in $(e_{\tilde t})\ubar\theta_{\tilde T}$: we find that 
\begin{equation*}\mathcal{C}=\sum_{\substack{T''\roweq  \tilde T,\\ \rho\in C_{\tilde t},\,\tau\in\mathfrak{S}_{n+1}:\\ T''\rho\,=\,\tilde{\mathbf{R}}*\tau}}
\sgn(\rho)\sgn(\tau).
\end{equation*} To prove the lemma, it will suffice to prove that $\mathcal{C}$ is non-zero.

We make an observation which allows us to write $\mathcal{C}$ in a more helpful form: that $T''\roweq\tilde{T}$ if and only if $T'\roweq T$, where $T'\in\mathcal{T}\big(\lam,(m^n)\big)$ is such that $T''=\tilde{T'\phantom.}$. To see this, observe that if $T''\roweq\tilde{T}$, then it is possible to remove row $q+1$ of $T''$ -- the row of length $m$ containing only $(n+1)$s -- leaving a $\lambda$-tableau, say $T'$, which is row equivalent to $T$. The reverse implication is clear.

Using this observation, together with the definition of $Y$ in Equation~\eqref{y_set} and the expression of $\rho\in C_{\tilde t}$ as $\rho=\pi y$ (where $\pi\in C_t$ and $y\in Y$), we have that 
\begin{equation*}\mathcal{C}=\sum_{\substack{T'\roweq T,\\ \pi\in C_t,\,y\in Y,\\ \tau\in \mathfrak{S}_{n+1}:\\ \tilde{T'\phantom.}\!\pi y=\tilde{\mathbf{R}}*\tau}}\sgn(\pi)\sgn(y)\sgn(\tau).
\end{equation*} 
Take $T'\roweq T$, $\pi \in C_t$, $y\in Y$ and $\tau\in\mathfrak{S}_{n+1}$ such that $\tilde{T'\phantom.}\!\pi y=\tilde{\mathbf{R}}*\tau$. Since $\pi\in C_t$, it must fix row $q+1$ of $\tilde{T'\phantom.}$. Thus, $\tilde{T'\phantom.}\!\pi=\tilde{T'\pi}$ and so 
\begin{equation}\label{tilde_R_tau}\tilde{T'\phantom.}\!\pi y=\tilde{\mathbf{R}}*\tau\iff \tilde{T'\pi} y=\tilde{\mathbf{R}}*\tau.
\end{equation}

The construction of $\tilde{\mathbf{R}}$ guarantees that the entries in row $q+1$ of $\tilde{\mathbf{R}}$ are all the same. Since $\tau$ is a relabelling permutation, it follows from the statement in Equation~\eqref{tilde_R_tau} that the entries in row $q+1$ of $\tilde{T'\pi} y$ are all the same. Using a construction argument again, the entries in row $q+1$ of $\tilde{T'\pi}$ are all identical. So, for the entries in row $q+1$ of $\tilde{T'\pi} y$ to also be identical, it must be that $y\in Y$ either fixes row $q+1$ of $\tilde{T'\pi}$ -- in which case $y\in Y$ is the identity permutation, which we denote by $\id$ -- or it must swap every identical entry, which is $n+1$, in row $q+1$ with some digit $\beta\in\mathrm{B}$, where \begin{equation*}\mathrm{B}:=\left\{\beta\in\{1,\ldots,n\}\;\Big\vert\;\beta\text{ appears in precisely the columns }1,\ldots,m\text{ of }\tilde{T'\pi}\right\}.\end{equation*} In the latter case, if, for all $1\leq j\leq m$, $\beta$ appears in row $b_j\neq q+1$ in column $j$ of $\tilde{T'\pi}$, then $y=y_\beta:=\prod_{j=1}^m y_{b_j}^{(j)}$. Define \begin{equation*}Y_0\Big(\tilde{T'\pi}\Big):=\{y\in Y\mid y=\id\text{ or }y=y_\beta \text{ for any }\beta\in\mathrm{B}\}.\end{equation*}

We have just seen that $\tilde{T'\pi}y=\tilde{\mathbf{R}}*\tau$ implies that $y\in Y_0\Big(\tilde{T'\pi}\Big)$, and it is easy to see that if $y\in Y_0\Big(\tilde{T'\pi}\Big)$ then $\tilde{T'\pi}y=\tilde{\mathbf{R}}*\tau$. So, we need only sum over $y\in Y_0\Big(\tilde{T'\pi}\Big)$ and therefore 
\begin{equation*}\mathcal{C}=\sum_{\substack{T'\roweq T,\\ \pi\in C_t,\,y\in Y_0\big(\tilde{T'\pi}\big),\\ \tau\in \mathfrak{S}_{n+1}:\\ \tilde{T'\pi} y=\tilde{\mathbf{R}}*\tau}}\sgn(\pi)\sgn(y)\sgn(\tau).
\end{equation*} 
Moreover, if $y=\id$, then $\tilde{T'\pi}=\tilde{T'\pi}y=\tilde{\mathbf{R}}*\tau$.  If $y=y_\beta$, which swaps every $n+1$ in row $q+1$ with some $1\leq\beta\leq n$, then $y$ has the effect of relabelling $\tilde{T'\pi}$ by the transposition $((n+1)\;\,\beta)\in\mathfrak{S}_{n+1}$ and in this case, $\tilde{T'\pi} y=\tilde{\mathbf{R}}*\tau$ if and only if $\tilde{T'\pi}*((n+1)\;\,\beta)=\tilde{\mathbf{R}}*\tau$.

 Using the fact that $m$ is odd, if $y\in Y_0\Big(\tilde{T'\pi}\Big)$ then $\sgn(y)=1$ if $y=\id$ and $\sgn(y)=(-1)^m=-1$ otherwise. So, we may write $\mathcal{C}$ as \begin{equation}\mathcal{C}=\sum_{\substack{T'\roweq T,\\ \pi\in C_t,\,\tau\in\mathfrak{S}_{n+1}:\\
\tilde{T'\pi}=\tilde{\mathbf{R}}*\tau}}\sgn(\pi)\sgn(\tau)\quad- \sum_{\substack{T'\roweq T,\;\pi\in C_t,\\ \tau\in\mathfrak{S}_{n+1},\, \beta\in{\mathrm B}:\\ \tilde{T'\pi}*\left((n+1)\;\,\beta\right)=\tilde{\mathbf{R}}*\tau}}\sgn(\pi)\sgn(\tau).\label{c_express}
\end{equation}

The requirement that $\tilde{T'\pi}*\left((n+1)\;\,\beta\right)=\tilde{\mathbf{R}}*\tau$ says that $\tilde{T'\pi}$ is a relabelling of $\tilde{\mathbf{R}}$. Let $d$ be the number of digits in the set $\{1,2,\ldots,n\}$ that appear in precisely columns $1,2,\ldots,m$ of $\mathbf{R}$. By construction of $\tilde{\mathbf{R}}$, there are $d+1$ of the digits $\{1,2,\ldots,n+1\}$ in precisely columns $1,2,\ldots,m$ of $\tilde{\mathbf{R}}$. So, since $\tilde{T'\pi}$ is a relabelling of $\tilde{\mathbf{R}}$, this forces $\vert{\mathrm B}\vert=d$. 

Now, consider the first sum in the right hand side of Equation~\eqref{c_express} and observe that $\tilde{T'\pi}=\tilde{\mathbf{R}}*\tau$ implies that $T'\pi$ is a relabelling of $\mathbf{R}$. Indeed, if $\tilde{T'\pi}=\tilde{\mathbf{R}}*\tau$, then $\tau$ must not affect row $q+1$ of ${\mathbf R}$, otherwise $\tilde{T'\pi}$ will not have $(n+1)$s in row $q+1$ (which it must do, by the definition of the $\sim$ construction). So, there exists a unique $\sigma\in\mathfrak{S}_n$ which satisfies $T'\pi=\mathbf{R}*\sigma$: take $\sigma=\tau$, from which it follows that $\sgn(\sigma)=\sgn(\tau)$.

Similarly, considering the second sum in~\eqref{c_express}, we see that $\tilde{T'\pi}*\left((n+1)\;\,\beta\right)=\tilde{\mathbf{R}}*\tau$ implies that $T'\pi$ is a relabelling of $\mathbf{R}$. In this case, define $\sigma:=\tau((n+1)\;\,\beta)$. Note that $\sigma$ fixes $n+1$ and so $\sigma\in\mathfrak{S}_n$. Further, $T'\pi={\mathbf R}*\sigma$ and $\sgn(\sigma)=-\sgn(\tau)$. Thus,
\begin{align*}\mathcal{C}&=\sum_{\substack{T'\roweq T,\\ \pi\in C_t,\,\sigma\in\mathfrak{S}_n:\\ T'\pi=\mathbf{R}*\sigma}}\sgn(\pi)\sgn(\sigma)-\vert{\mathrm B}\vert \sum_{\substack{T'\roweq T,\\ \pi\in C_t,\,\sigma\in\mathfrak{S}_n:\\ T'\pi=\mathbf{R}*\sigma}}\sgn(\pi)\big(-\sgn(\sigma)\big)\\
&=(d+1)\sum_{\substack{T'\roweq T,\\ \pi\in C_t,\,\sigma\in\mathfrak{S}_n:\\ T'\pi=\mathbf{R}*\sigma}}\sgn(\pi)\sgn(\sigma)
\end{align*}
and so, using the expression for $\mathscr{C}$ given in Equation~\eqref{coeff1}, we are finally able to conclude that the coefficient of $\ubar{\tilde{\mathbf R}}$ in $(e_{\tilde t})\ubar\theta_{\tilde T}$ is a non-zero multiple of the coefficient $\mathscr{C}$ of $\ubar{\mathbf R}$ in $(e_t)\ubar\theta_T$: more precisely
\begin{equation}\mathcal{C}=(d+1)\mathscr{C}.\qedhere\label{coeff_rel}
\end{equation}
\end{proof}

To complete the proof of part 2 of Theorem~\ref{row_m_thm}, it remains to prove that the multiplicity with which $S^{\tilde\lam}$ appears as a composition factor in the decomposition of $K^{(m^{n+1})}$ is bounded below by the multiplicity of $S^\lam$ in the decomposition of $K^{(m^n)}$. 

Let $B\subseteq\left\{\ubar{\mathbf R}\;\left\vert\;\mathbf{R}\in\mathcal{T}\big(\lam,(m^n)\big)\right.\right\}$ be a basis for $K^{(m^n)}$. There is a bijection $B\to\mathscr{B}:=\left\{\left.\ubar{\tilde{\mathbf R}}\;\right\vert\;\ubar{\mathbf R}\in B\right\}$ defined on oriented column tabloids by 
\begin{equation*} \ubar{\mathbf R}=\begin{vmatrix}X_1\\ X_2\\ \vdots\\ X_n\end{vmatrix}\longmapsto \begin{vmatrix}X_1\\ X_2\\ \vdots\\ X_n\\ \{mn+1,\ldots, mn+m\}\end{vmatrix}=\ubar{\tilde{\mathbf R}}.
\end{equation*}
This is a direct consequence of the construction of $\tilde{\mathbf R}$ and the definition of the labelling tableau $\tilde t$. So, since $B$ is a basis, and therefore all its elements are distinct, the oriented column tabloids which are elements of $\mathscr{B}=\left\{\left.\ubar{\tilde{\mathbf R}}_j\;\right\vert\; 1\leq j\leq |B|\right\}$ must also be distinct. It follows that the formal sum $\sum_{j=1}^{|B|}\beta_j\ubar{\tilde{\mathbf R}}_j$ is equal to zero only if $\beta_j=0$ for all $1\leq j\leq |B|$. In other words, $\mathscr{B}$ is a linearly independent set, which can be extended to a basis for $K^{(m^{n+1})}$.

Suppose that $S^\lam$ appears in $K^{(m^n)}$ with multiplicity $r\geq 0$. By Lemma~\ref{basis_lem}, there is a basis $\left\{\ubar\theta_{T_1},\ldots,\ubar\theta_{T_r}\right\}$ for $\Hom_{\mathbb{C}\mathfrak{S}_{mn}}\big(S^\lam,K^{(m^n)}\big)$, where $T_1,\ldots,T_r\in\mathcal{T}_0\big(\lam,(m^n)\big)$.
 
For a contradiction, assume that $\sum_{i=1}^r\alpha_i\ubar\theta_{\tilde T_i}=0$ for some scalars $\alpha_i$, which are not all zero. It follows that $(e_{\tilde t})\left(\sum_{i=1}^r\alpha_i\ubar\theta_{\tilde T_i}\right)=0$ and so the coefficient of any basis element of the form $\ubar{\tilde{\mathbf R}}$ in $(e_{\tilde t})\left(\sum_{i=1}^r\alpha_i\ubar\theta_{\tilde T_i}\right)$ is zero. If we let $\mathcal{C}_i$ denote the coefficient of $\ubar{\tilde{\mathbf R}}$ in $(e_{\tilde t})\ubar\theta_{\tilde T_i}$, then $\sum_{i=1}^r\alpha_i\mathcal{C}_i=0$. Applying the result in Equation~\eqref{coeff_rel}, $\mathcal{C}_i=(d+1)\mathscr{C}_i$, where $d$ is the number of digits in the set $\{1,2,\ldots,n\}$ which appear in precisely columns $1,2,\ldots,m$ of $\mathbf{R}$ and so does not depend on $i$. Thus, the coefficient of $\ubar{\mathbf R}$ in $(e_t)\left(\sum_{i=1}^r\alpha_i\ubar\theta_{T_i}\right)$ is $\frac{1}{d+1}\sum_{i=1}^r\alpha_i\mathcal{C}_i$ and so is also zero. We chose $\ubar{\tilde{\mathbf R}}$ arbitrarily and so it follows that $\sum_{i=1}^r\alpha_i\ubar\theta_{T_i}$ maps the generator $e_t$ to zero. This implies that $\sum_{i=1}^r\alpha_i\ubar\theta_{T_i}=0$.  However, since $\big\{\ubar\theta_{T_1},\ldots,\ubar\theta_{T_r}\big\}$ is a linearly independent set, it follows that $\alpha_i=0$ for all $1\leq i\leq r$, but this contradicts the assumptions on $\{\alpha_i\mid 1\leq i\leq r\}$.\medskip

We conclude this section by indicating the modifications that need to be made to the above proofs in order to obtain the proof of part 1 of Theorem~\ref{row_m_thm}. Firstly, we remark that we obtain a statement analogous to Lemma~\ref{basis_lem}, establishing a basis for $\Hom_{\mathbb{C}\mathfrak{S}_{mn}}\left(S^\lam,H^{(m^n)}\right)$ consisting of maps labelled by semistandard tableaux.

In~\S\ref{signed_setting}, we alluded to the fact that in the Foulkes setting we need only consider the pattern of tableaux entries. Therefore, we may disregard any signs corresponding to relabelling permutations $\tau$ and $\sigma$ that appear in the above proofs. Instead of writing, say $T'\pi=\mathbf R*\sigma$, we write $\bar{T'\pi}=\bar{\mathbf R}$ to reflect the fact that $T'\pi$ is a relabelling of $\mathbf R$. However, where it occurs in the proof of Lemma~\ref{existence_lem}, we retain the specification of the relabelling of $\tilde{T'\pi}$ by the transposition $((n+1)\;\,\beta)$ since this is important for the proof. However, we still suppress the action of $\tau$, so that $\tilde{T'\pi}*((n+1)\;\,\beta)=\tilde{\mathbf{R}}*\tau$ is replaced with the statement $\bar{\tilde{T'\pi}*((n+1)\;\,\beta)}=\bar{\tilde{\mathbf{R}}}$, which says that $\tilde{T'\pi}$ is a relabelling of $\mathbf{R}$. It is then sufficient to observe that $\bar{\tilde{T'\pi}}=\bar{\tilde{\mathbf R}}$ implies that $\bar{T'\pi}=\bar{\mathbf R}$; we need not worry about the specific relabelling permutations. The conclusion of the proof of part 1 of Theorem~\ref{row_m_thm} is analogous to that of part 2: we find that the coefficient $\mathcal{C}$ of $\bar{\tilde{\mathbf R}}$ in $(e_{\tilde t})\bar\theta_{\tilde T}$ is a non-zero multiple of the coefficient $\mathscr{C}$ of $\bar{\mathbf R}$ in $(e_t)\bar\theta_T$: more precisely $\mathcal{C}=(d+1)\mathscr{C}$.

\section{Proof of Theorem~\ref{row_am_thm}}\label{row_am_thm_proof}

This proof proceeds in a similar manner to that of Theorem~\ref{row_m_thm}. In the setting of Theorem~\ref{row_am_thm}, we begin by establishing the existence of $S^{\tilde\lam}$ as a composition factor of $H^{(m^{n+a})}$ for any $a\geq 2$. Recall that, for this section, we define $\tilde\lam:=(am,\lam_1,\ldots,\lam_\ell)$ and we redefine $\tilde t$ and $\tilde T$ as in \S\ref{am_tableaux}.

\begin{lem}\label{existence_lem2}
Under the assumptions of Theorem~\ref{row_am_thm}, if $\left(\bar\theta_T:S^\lam\to H^{(m^n)}\right)\neq 0$ for some tableau $T\in \mathcal{T}_0\big(\lam,(m^n)\big)$, then $\big(\bar\theta_{\tilde T}:S^{\tilde \lam}\to H^{(m^{n+a})}\big)\neq 0$.
\end{lem}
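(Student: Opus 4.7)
I would follow the strategy of Lemma~\ref{existence_lem}, in the Foulkes (unsigned) setting and adapted to the block structure of $\tilde T$. Assume $\bar\theta_T\neq 0$ and pick a basis element $\bar{\mathbf R}$ of $H^{(m^n)}$ appearing with non-zero coefficient
\[\mathscr{C}=\sum_{\substack{T'\roweq T,\,\pi\in C_t:\\ \bar{T'\pi}=\bar{\mathbf R}}}\sgn(\pi)\neq 0\]
in $(e_t)\bar\theta_T$; as indicated at the end of Section~\ref{row_m_thm_proof}, relabelling signs are suppressed throughout. Using $\kappa_{\tilde t}=\kappa_t\prod_{j=1}^{\lam_1}\sum_i\sgn(v_i^{(j)})v_i^{(j)}$ and the decomposition $\rho=\pi v$ with $\pi\in C_t$ and $v\in V$, express the coefficient $\mathcal{C}$ of $\bar{\tilde{\mathbf R}}$ in $(e_{\tilde t})\bar\theta_{\tilde T}$ as the analogous signed sum over triples $(T'',\pi,v)$. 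The goal is to show that $\mathcal{C}$ is a positive integer multiple of $\mathscr{C}$.

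The key analysis identifies the valid triples $(T'',\pi,v)$ satisfying $\bar{T''\pi v}=\bar{\tilde{\mathbf R}}$. Any $T''\roweq\tilde T$ is determined (up to permutations within each block of identical row-$1$ entries) by a choice of $T'\roweq T$ describing the lower $\ell$ rows of $T''$ shifted by $a$, together with an arrangement of the multiset $\{1^m,2^m,\ldots,a^m\}$ in row $1$. The set partition $\bar{\tilde{\mathbf R}}$ contains the $a$ specific parts $S_\gamma=\{mn+(\gamma-1)m+1,\ldots,mn+\gamma m\}$ for $\gamma=1,\ldots,a$, so the first row of $T''\pi v$ must be constant on each of the $a$ contiguous blocks of $m$ positions, and the values across blocks must be pairwise distinct.

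Because $v$ acts non-trivially only on columns $j\leq\lam_1<2m$, at most the first block (columns $1,\ldots,m$) can lie entirely within the active columns of $v$, and only when $\lam_1\geq m$. This restricts the admissible $v$'s to two types: either (i) $v$ fixes row $1$, in which case $T''$'s row $1$ must itself already have block structure (one value in $\{1,\ldots,a\}$ per block); or (ii) $v$ is a full swap of the first block with $m$ entries of a single value $\beta\in\{a+1,\ldots,n+a\}$ that appears in precisely columns $1,\ldots,m$ of the lower part of $T''\pi$. Partial-block swaps, cross-block swaps, and attempted full swaps of blocks $\gamma\geq 2$ all produce row-$1$ multisets or column-patterns incompatible with $\bar{\tilde{\mathbf R}}$, hence contribute zero.

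Since $m$ is even, the sign of a full block swap is $(-1)^m=+1$, so every contribution to $\mathcal{C}$ has sign $\sgn(\pi)$ and no cancellation occurs. Tallying as in Lemma~\ref{existence_lem}, one finds $\mathcal{C}=a!\,(|\mathrm{B}'|+1)\,\mathscr{C}$, where $\mathrm{B}'\subseteq\{1,\ldots,n\}$ is the set of values appearing in precisely columns $1,\ldots,m$ of $\mathbf{R}$ (with the convention $|\mathrm{B}'|=0$ when $\lam_1<m$), and the factor $a!$ accounts for the row-$1$ arrangements of $T''$ that all yield the same set partition under $\psi$. Since $a!(|\mathrm{B}'|+1)>0$ and $\mathscr{C}\neq 0$, we conclude $\mathcal{C}\neq 0$. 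The main obstacle is the combinatorial case analysis: rigorously ruling out partial-block swaps, cross-block interactions, and full swaps of blocks $\gamma\geq 2$, and correctly tallying the overcounting coming from the distinct row-$1$ arrangements of $T''$ that all project to the same element of $H^{(m^{n+a})}$.
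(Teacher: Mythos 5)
Your proposal follows the paper's argument essentially step for step: the same decomposition $\rho=\pi v$ with $\pi\in C_t$ and $v\in V$, the same identification of admissible $v$ (trivial or a full swap of columns $1,\ldots,m$ with a value $\beta$ appearing in precisely those columns), the same observation that $m$ even kills all signs from $v$, and the same final formula $\mathcal{C}=a!(d+1)\mathscr{C}$. The one point where the paper is more careful and your sketch hand-waves is the step showing that row~$1$ of $T''$ itself (not merely of $T''\pi v$) is block-constant: the paper first pins down columns $\lam_1+1,\ldots,am$ of $T''$, which $\pi v$ fixes, and then uses $\lam_1<2m$ to force columns $m+1,\ldots,\lam_1$ of row~$1$ of $T''$ to carry $(2)\omega$ (since $v$ can only pull values $>a$ into row~$1$), whence $T''=\tilde{T'}*\omega$ for a unique $\omega\in\mathfrak{S}_a$; this is exactly what rigorously excludes the ``partial-block'' and ``cross-block'' cases you flag as the remaining obstacle. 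With that filled in, your account matches the paper's proof.
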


\begin{proof}
Assume that $\left(\bar\theta_T:S^{\lam} \rightarrow H^{(m^n)}\right)\neq 0$; it follows that $(e_t)\bar\theta_T\neq 0$. Pick any basis element $\bar{\mathbf R}$ appearing with non-zero coefficient $\mathscr{C}$ in $(e_t)\bar\theta_T$. Since $\psi$ is surjective, there exists ${\mathbf R}\in\mathcal{T}\big(\lam,(m^n)\big)$ such that $\psi:{\mathbf R}\mapsto{\bar{\mathbf R}}$.  An expression for the coefficient $\mathscr{C}$ is 
\begin{equation}\mathscr{C}=\sum_{\substack{T'\roweq T,\\ \pi\in C_t:\\ \bar{T'\pi}=\bar{\mathbf R}}}\sgn(\pi).\label{coeff3}
\end{equation}Fix $a\geq 2$. It will suffice to show that the coefficient $\mathcal{C}$ of $\bar{\tilde{\mathbf R}}$ in $(e_{\tilde t})\bar\theta_{\tilde T}$ is non-zero, where 
\begin{equation*}\mathcal{C}:=\sum_{\substack{T''\roweq  \tilde T,\\ \rho\in C_{\tilde t}:\\ \bar{T''\!\rho}=\bar{\tilde{\mathbf R}}}}\sgn(\rho).
\end{equation*}

Firstly, recall from \S\ref{am_tableaux} that $\rho$ may be expressed as $\rho=\pi v$ for some unique $\pi\in C_t$ and $v\in V$, the definition of $V$ being that given in Equation~\eqref{y_set2}. Secondly, take $T''\roweq \tilde T,\,\pi\in C_t$ and $v\in V$ such that $\bar{T''\pi v}=\bar{\tilde{\mathbf R}}$. Entries in row 1 of $T''\pi v$ must have the same pattern as entries in row 1 of $\tilde{\mathbf R}$, the latter being $1\ldots 1\,2\ldots 2\ldots a\ldots a$, with $m$ copies of each digit. Also note that the first row of $\tilde{\mathbf R}$ is the same as the first row of $\tilde T$. Since there is only one entry in columns $\lam_1+1,\ldots,am$ of $T''$, $\pi v$ fixes these columns. Hence, entries in columns $\lambda_1+1,\ldots,am$ of $T''$ must be a relabelling (by $\omega\in\mathfrak{S}_a$, say) of the entries in columns $\lambda_1+1,\ldots,am$ of $\tilde{\mathbf R}$. Since $\lam_1<2m$, to preserve the pattern of the first row, we must have the entries in columns $m+1,\ldots,\lam_1$ in row 1 of $T''$ equal to the entry in columns $\lam_1+1,\ldots,2m$, which is $(2)\omega$. The fact that $T''\roweq \tilde T$ tells us that there is one remaining digit (repeated $m$ times) which is the entry in columns $1,\ldots,m$ of row 1 of $T''$. We conclude that row 1 of $T''$ is of the form \begin{equation*}\underbracket[0.5pt]{(1)\omega\,\,(1)\omega\,\,\cdots\,\,(1)\omega}_{m\text{ copies}}\,\,\underbracket[0.5pt]{(2)\omega\,\,(2)\omega\,\,\cdots\,\,(2)\omega}_{m\text{ copies}}\,\,\cdots\cdots\cdots\,\,\underbracket[0.5pt]{(a)\omega\,\,(a)\omega\,\,\cdots\,\,(a)\omega}_{m\text{ copies}},
\end{equation*}where $\omega \in\mathfrak{S}_a$. Entries in the remaining rows of $T''$ are ${T''}_{i+1}^{(j)}:={T'}_{i}^{(j)}+a$ (where $1\leq i\leq \ell$) for some $T'\roweq T$, that is, rows $2,3,\ldots,\ell+1$ of $\tilde{T'}$. It follows that $T''=\tilde{T'}*\omega$, where $\omega\in \mathfrak{S}_a\subseteq\mathfrak{S}_{a+n}$, and so the expression for $\mathcal{C}$ becomes 
\begin{equation*}\mathcal{C}=\sum_{\substack{T'\roweq T,\,\omega\in \mathfrak{S}_a,\\ \pi\in C_t,\,v\in V:\\ \bar{(\tilde{T'}*\omega)\pi v}=\bar{\tilde{\mathbf R}}}}\sgn(\pi)\sgn(v).
\end{equation*}

We must determine the $v\in V$ for which $\bar{\big(\tilde{T'}*\omega\big)\pi v}=\bar{\tilde{\mathbf R}}$ holds. Recall that $\pi\in C_t$ fixes row $1$ of $T''$. Therefore, $\bar{\big(\tilde{T'}*\omega\big)\pi v}=\bar{\tilde{\mathbf R}}$ if and only if $\bar{\big(\tilde{T'\pi}*\omega\big)v}=\bar{\tilde{\mathbf R}}$. To preserve the pattern of row 1 of $\tilde{T'\pi}*\omega$, $v$ must either fix row 1; or swap every $(1)\omega$ in row 1 with a digit $\beta\in\mathrm{B}$ (and fix every $(2)\omega,\ldots,(a)\omega$ because $\lam_1<2m$), where \begin{equation*}{\mathrm B}:=\left\{\beta\in\{a+1,\ldots,a+n\}\;\Big\vert\; \beta \text{ appears in precisely columns }1,2\ldots,m\text{ of }\tilde{T'\pi}*\omega\right\}.\end{equation*} In other words, if, for all $1\leq j\leq m$, $\beta$ appears in row $b_j\neq 1$ in column $j$ of $\tilde{T'\pi}*\omega$, then \begin{equation*}v\in V_0:=\big\{\id\big\}\cup\left\{v_\beta:=\prod\nolimits_{j=1}^m v_{b_j}^{(j)} \text{ for any }\beta\in{\mathrm B}\right\}\subseteq V;\end{equation*} note that $V_0$ depends on $\tilde{T'\pi}*\omega$. In fact, $\bar{\big(\tilde{T'\pi}*\omega\big)v}=\bar{\big(\tilde{T'\pi}*\omega\big)}$ if and only if $v\in V_0$.  

Let $d$ be the number of digits in the set $\{1,\ldots,n\}$ which appear in precisely columns $1,\ldots,m$ of $\mathbf R$. By construction of $\tilde{\mathbf R}$ and the requirement that $\bar{\tilde{T'\pi}*\omega}=\bar{\tilde{\mathbf{R}}}$, we deduce that $\vert V_0\setminus\{\id\}\vert=d$. Also, since $m$ is even, if $v\in V_0$ then $v$ is even. This knowledge allows us to write $\mathcal{C}$ as
\begin{equation*}\mathcal{C}=\sum_{\substack{T'\roweq T,\\ \omega\in \mathfrak{S}_a,\,\pi\in C_t:\\ \bar{\tilde{T'\pi}*\omega}=\bar{\tilde{\mathbf{R}}}}}\vert V_0\vert\sgn(\pi)=(d+1)\sum_{\substack{T'\roweq T,\\ \omega\in \mathfrak{S}_a,\,\pi\in C_t:\\ \bar{\tilde{T'\pi}*\omega}=\bar{\tilde{\mathbf{R}}}}}\negthickspace\sgn(\pi).
\end{equation*}

Finally, we should observe that $\bar{\tilde{T'\pi}*\omega}=\bar{\tilde{\mathbf{R}}}$ implies that $\bar{T'\pi}=\bar{\mathbf R}$. Conversely, given any $T'\roweq T$ and $\pi\in C_t$ such that $\bar{T'\pi}=\bar{\mathbf R}$, setting $T''=\tilde{T'}*\omega$ for some $\omega\in \mathfrak{S}_a$ we find that $T''\roweq \tilde T$ and for $v\in V_0$, $\bar{T''\pi v}=\bar{\tilde{T'\pi}*\omega}=\bar{\tilde{\mathbf R}}$. Hence, using the expression for $\mathscr{C}$ given in Equation~\eqref{coeff3}, we conclude that the coefficient of $\bar{\tilde{\mathbf R}}$ in $(e_{\tilde t})\bar\theta_{\tilde T}$ is a non-zero multiple of the coefficient $\mathscr{C}$ of $\bar{\mathbf R}$ in $(e_t)\bar\theta_T$: more precisely 
\begin{equation}\mathcal{C}=\vert \mathfrak{S}_a\vert(d+1)\sum_{\substack{T'\roweq T,\\ \pi\in C_t:\\ \bar{T'\pi}=\bar{\mathbf{R}}}}\sgn(\pi)=a!(d+1)\mathscr{C}.\qedhere\label{coeff_rel2}
\end{equation}
\end{proof}

It just remains to verify the bound on the multiplicity with which $S^{\tilde\lam}$ appears as a composition factor in the decomposition of $H^{(m^{n+a})}$. We will use the fact that, if $B\subseteq\left\{\bar{\mathbf R}\;\left\vert\;{\mathbf R}\in \mathcal{T}\big(\lam,(m^n)\big)\right.\right\}$ is a basis for $H^{(m^n)}$, then the set $\mathscr{B}:=\left\{\left.\bar{\tilde{\mathbf R}}\;\right\vert\;\bar{\mathbf R}\in B\right\}\subseteq H^{(m^{n+a})}$ is linearly independent.  Indeed, for any $a\geq 2$, there exists a bijection $B\to\mathscr{B}$ defined on set partitions by 
\begin{equation*}
\{X_1,\ldots,X_n\}\longmapsto\big\{X_1,\ldots,X_n,\{mn+1,\ldots,mn+m\},\ldots,\{mn+(a-1)m+1,\ldots,mn+am\}\big\}.
\end{equation*}
Since elements of $B$ are distinct, elements of $\mathscr{B}$ must also be distinct. Thus, the formal sum $\sum_{j=1}^{|B|}\beta_j\bar{\tilde{\mathbf R}}_j$ is equal to zero only if $\beta_j=0$ for all $1\leq j\leq |B|$. 

Suppose that $S^\lam$ appears in $H^{(m^n)}$ with multiplicity $r\geq 0$. There is a basis $\left\{\bar\theta_{T_1},\ldots,\bar\theta_{T_r}\right\}$ for $\Hom_{\mathbb{C}\mathfrak{S}_{mn}}\!\big(S^\lam,H^{(m^n)}\big)$, where $T_1,\ldots,T_r\in\mathcal{T}_0\big(\lam,(m^n)\big)$. For a contradiction, assume that $\sum_{i=1}^r\alpha_i\bar\theta_{\tilde T_i}=0$ for some scalars $\alpha_i$, which are not all zero. It follows that $(e_{\tilde t})\left(\sum_{i=1}^r\alpha_i\bar\theta_{\tilde T_i}\right)=0$. However, for any $\mathbf R\in\mathcal{T}\big(\lam,(m^n)\big)$, the coefficient of a basis element $\bar{\tilde{\mathbf R}}$ in $(e_{\tilde t})\left(\sum_{i=1}^r\alpha_i\bar\theta_{\tilde T_i}\right)$ is, by the result in Equation~\eqref{coeff_rel2}, a non-zero multiple of the coefficient of $\bar{\mathbf R}$ in $(e_t)\left(\sum_{i=1}^r\alpha_i\ubar\theta_{T_i}\right)$. It follows that $(e_t)\left(\sum_{i=1}^r\alpha_i\ubar\theta_{T_i}\right)=0$ and thus $\alpha_1=\ldots=\alpha_r=0$, which contradicts the assumptions on $\{\alpha_i\mid 1\leq i\leq r\}$. 

\section{Conjectures}\label{conjectures}
We conjecture that two of the results stated in Section~\ref{existing} will generalise.  In particular, our first two conjectures are generalisations of Foulkes' Second Conjecture (\ref{FSC}) and Newell's result (\ref{Newell_statement}), respectively.
\begin{conj}If $\lam\vdash mn$, then $p^\lam_{\nu,(m)}\leq p^{\lam+(n)}_{\nu,(m+1)}$ for any $\nu\vdash n$.\label{conj1}
\end{conj}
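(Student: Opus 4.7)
The plan is to adapt the semistandard-homomorphism framework of Sections~\ref{rep_theory}--\ref{row_am_thm_proof} from the Foulkes and signed Foulkes cases to the general twisted Foulkes module $H^{(m^n)}_\nu$, and then to mimic the coefficient-comparison argument of Lemmas~\ref{existence_lem} and~\ref{existence_lem2}. Fix $\lam\vdash mn$ and $\nu\vdash n$. For each semistandard tableau $T\in\mathcal{T}_0\big(\lam,(m^n)\big)$ whose associated homomorphism $\bar\theta_T^\nu:S^\lam\to H^{(m^n)}_\nu$ is non-zero, I would construct a semistandard tableau $\tilde T\in\mathcal{T}_0\big(\lam+(n),((m+1)^n)\big)$, prove that $\bar\theta_{\tilde T}^\nu:S^{\lam+(n)}\to H^{((m+1)^n)}_\nu$ is also non-zero, and then deduce via a linear-independence argument that $p^\lam_{\nu,(m)}\leq p^{\lam+(n)}_{\nu,(m+1)}$.

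For the tableau lift, I would define $\tilde T$ by sort-merging row~$1$ of $T$ with the sequence $(1,2,\ldots,n)$, and leaving all other rows unchanged. Because $\tilde T_1^{(j)}$ is the $j$\tsp{th} smallest element of the merged sequence, one obtains $\tilde T_1^{(j)}\leq T_1^{(j)}<T_2^{(j)}=\tilde T_2^{(j)}$, so $\tilde T$ is semistandard; the newly added columns $\lam_1+1,\ldots,\lam_1+n$ each have a single entry and are trivially column-strict. The shape of $\tilde T$ is then $\lam+(n)$ and its type is $((m+1)^n)$, as required. The corresponding labelling tableau $\tilde t$ is defined analogously, by placing the digits $mn+1,\ldots,mn+n$ as the new entries of row~$1$.

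For the coefficient-lifting step, I would first extend Sections~\ref{foulkes_setting} and~\ref{signed_setting} to $H^{(m^n)}_\nu$ by constructing an appropriate surjection from $M^{(m^n)}$, together with a $\nu$-polytabloid decoration of its $n$ rows, onto $H^{(m^n)}_\nu$: via Frobenius reciprocity and the embedding $S^\nu\hookrightarrow M^\nu$, every $\mathbb{C}\mathfrak{S}_{mn}$-homomorphism $S^\lam\to H^{(m^n)}_\nu$ arises as the composition of a semistandard homomorphism $\theta_T$ with a suitable signed sum encoding the $S^\nu$-action on the rows. With this model, pick a basis element $\mathbf{R}$ appearing with non-zero coefficient $\mathscr{C}$ in $(e_t)\bar\theta_T^\nu$, form its lift $\tilde{\mathbf R}$ by applying the same sort-merge to row~$1$ of $\mathbf{R}$, and compute the coefficient $\mathcal{C}$ of $\tilde{\mathbf R}$ in $(e_{\tilde t})\bar\theta_{\tilde T}^\nu$ by factorising $C_{\tilde t}$ as a product of $C_t$ with coset representatives indexing the $n$ new column entries, exactly as in Equation~\eqref{y_set}. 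The goal is an identity $\mathcal{C}=c\,\mathscr{C}$ with $c>0$ depending combinatorially on $\mathbf R$ and $\nu$; the basis-bijection argument from Section~\ref{row_m_thm_proof} will then upgrade this to linear independence of the lifts.

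The principal obstacle is the first step: producing a workable tableau model for $H^{(m^n)}_\nu$ when $\nu$ is arbitrary. For $\nu=(n)$ and $\nu=(1^n)$ one has clean descriptions via unoriented and oriented column-tabloids respectively, but for general $\nu$ one must carry the full $\nu$-polytabloid data of $S^\nu$ on the set of $n$ rows, which vastly increases the number of terms and sign cancellations in the coefficient computation. A secondary difficulty is that, unlike the single-row insertions of Sections~\ref{m_tableaux} and~\ref{am_tableaux}, the new entries $1,\ldots,n$ in $\tilde T$ are pairwise distinct, so the coset representatives that preserve the row pattern of $\tilde{\mathbf R}$ must be analysed label by label, and their interaction with the $\nu$-twist is precisely what controls the sign of the factor $c$; verifying $c>0$ in general is likely the most delicate point of the argument.
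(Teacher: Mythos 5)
Note first that the paper does not prove this statement at all: it is presented only as Conjecture~\ref{conj1}, and the remark that follows it merely reports that the special case $\nu=(1^n)$ has been settled elsewhere (in~\cite[Theorem~4.3.4]{deboeck}). There is therefore no paper proof for you to have rediscovered; at best you can be compared against the \emph{method} of Sections~\ref{rep_theory}--\ref{row_am_thm_proof}. On those terms your sketch is a sensible plan, and you have honestly named the two obstacles, but both are genuine and unresolved, so what you have is a programme rather than a proof. The decisive missing piece is the combinatorial model for $H^{(m^n)}_\nu$. The paper's argument works in the two cases where $\dim S^\nu=1$, because then the surjection $M^{(m^n)}\to H^{(m^n)}_\nu$ is realised by a simple operation on tabloids (forgetting row order for $\psi$, assigning an orientation for $\phi$), and the whole coefficient computation becomes a signed count of relabelling permutations. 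For $\dim S^\nu>1$ the surjection still exists -- by semisimplicity, since $M^{(m^n)}=\big(\Inf M^{(1^n)}\big)\!\uparrow_{\mathfrak{S}_m\wr\mathfrak{S}_n}^{\mathfrak{S}_{mn}}$ maps onto $\big(\Inf S^\nu\big)\!\uparrow_{\mathfrak{S}_m\wr\mathfrak{S}_n}^{\mathfrak{S}_{mn}}$ -- but it carries the full $\nu$-polytabloid structure on the $n$ blocks, and ``the coefficient of $\tilde{\mathbf R}$'' is then not a signed count but an inner product in a space of dimension $\dim S^\nu$. Until this model is pinned down, the identity $\mathcal{C}=c\,\mathscr{C}$ cannot be formulated, never mind verified, and the sign analysis that controls $c>0$ is exactly where one expects the argument to break for general $\nu$.

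There is also a concrete technical inaccuracy in the sketch. Your lift appends $n$ boxes to the end of row~$1$ and leaves all other rows unchanged, with $\tilde t$ placing $mn+1,\ldots,mn+n$ in the new boxes. With that choice, columns $1,\ldots,\lam_1$ of $\tilde t$ have exactly the same entries as those of $t$, and the new columns $\lam_1+1,\ldots,\lam_1+n$ contain a single entry each, so $C_{\tilde t}=C_t$ and $\kappa_{\tilde t}=\kappa_t$. The coset-representative factorisation you invoke from Equation~\eqref{y_set} therefore does not apply: that factorisation appears in Sections~\ref{m_tableaux} and~\ref{am_tableaux} precisely because the inserted row of length $m$ (or $am$) \emph{lengthens} the leading columns and enlarges the column stabiliser. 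In your lift the enlargement occurs in the row stabiliser $R_{\tilde t}\supsetneq R_t$, not the column stabiliser, so the bookkeeping you actually need compares row-equivalence classes of $\tilde T$ with those of $T$. Moreover, because the $n$ added entries are pairwise distinct, the analysis cannot be reduced to the uniform-sign full-row swaps $y_\beta$ (with $\sgn(y_\beta)=(-1)^m$) that drive the paper's Lemmas~\ref{existence_lem} and~\ref{existence_lem2}; one would have to track a much larger and less symmetric family of permutations, and whether the resulting cancellations leave a positive multiple is exactly the open content of the conjecture.
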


\begin{rem} The proof of~\ref{FSC} is not the only evidence in support of Conjecture~\ref{conj1}.  The conjecture has also been proved in the case $\nu=(1^n)$ in~\cite[Theorem~4.3.4]{deboeck}.\end{rem}

\begin{conj}If $\lam\vdash mn$, then $p^\lam_{\nu,(m)}=p^{\lam+(1^n)}_{\nu',(m+1)}$ for any $\nu\vdash n$.\label{conj2}
\end{conj}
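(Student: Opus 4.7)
The plan is to lift the claimed numerical identity to an isomorphism of $\mathfrak{S}_n$-modules, and then realise that isomorphism through an explicit column-prepending bijection of semistandard tableaux, twisted by a sign.

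First, I would exploit the fact that $M^{(m^n)}$, described as in Section~\ref{rep_theory} in terms of $\lam$-tableaux of type $(m^n)$, carries a commuting $\mathfrak{S}_n$-action via the relabelling operation $T\mapsto T*\sigma$. The resulting bimodule decomposition is
\begin{equation*}
M^{(m^n)}\cong\bigoplus_{\nu\vdash n}S^\nu\boxtimes H^{(m^n)}_\nu,
\end{equation*}
so $W:=\Hom_{\mathbb{C}\mathfrak{S}_{mn}}\!\left(S^\lam,M^{(m^n)}\right)\cong\bigoplus_\nu S^\nu\otimes\Hom(S^\lam,H^{(m^n)}_\nu)$ as $\mathfrak{S}_n$-modules, whence $p^\lam_{\nu,(m)}=\dim\Hom_{\mathfrak{S}_n}(S^\nu,W)$. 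Writing $W'$ for the analogous $\mathfrak{S}_n$-module with $\lam+(1^n)$ and $m+1$ in place of $\lam$ and $m$, Conjecture~\ref{conj2} is equivalent to the $\mathfrak{S}_n$-module isomorphism $W\cong W'\otimes\sgn$, since $S^\nu\otimes\sgn\cong S^{\nu'}$.

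To construct this isomorphism I would use the column-prepending map $\Phi:\mathcal{T}_0\!\big(\lam,(m^n)\big)\to\mathcal{T}_0\!\big(\lam+(1^n),((m+1)^n)\big)$ that sends $T$ to the tableau $\tilde T$ whose first column reads $1,2,\ldots,n$ top to bottom and whose remaining columns are those of $T$. If $T$ is semistandard then so is $\tilde T$; conversely, in any semistandard target tableau the strict increase of column~$1$ and the available entries $\{1,\ldots,n\}$ force that column to be $1,2,\ldots,n$ exactly, so $\Phi$ is a bijection. (If $\ell(\lam)>n$ then both sides of the conjecture vanish, so we may assume $\ell(\lam)\le n$.) Setting $\Psi(\theta_T):=\theta_{\tilde T}$ and extending linearly yields a vector-space isomorphism $\Psi:W\to W'$.

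The central step is then to verify that $\Psi$ is $\mathfrak{S}_n$-equivariant with respect to the sign-twisted action on $W'$, that is, $\Psi(\theta_T\cdot\sigma)=\sgn(\sigma)\,\Psi(\theta_T)\cdot\sigma$ for every $\sigma\in\mathfrak{S}_n$. Unpacking definitions, this becomes $\theta_{\tilde{T*\sigma}}=\sgn(\sigma)\,\theta_{\tilde T*\sigma}$ in $W'$. The two tableaux $\tilde{T*\sigma}$ and $\tilde T*\sigma$ agree outside column~$1$, where one has $1,2,\ldots,n$ in order and the other has $\sigma(1),\ldots,\sigma(n)$. Each of the swaps needed to sort the latter column into the former can be realised by a transposition in the column stabiliser $C_{\tilde t}$, which flips the sign of $e_{\tilde t}$ and hence of the associated $\theta$-map, contributing a factor of $-1$; accumulating these across a reduced expression for $\sigma$ recovers precisely $\sgn(\sigma)$. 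Granted this, comparing $\nu$-isotypic components gives $p^\lam_{\nu,(m)}=p^{\lam+(1^n)}_{\nu',(m+1)}$, as required.

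The hard part, and likely the main obstacle, is turning the last paragraph into a genuine proof. The tableaux $T*\sigma$ and $\tilde T*\sigma$ are typically not semistandard, so $\Psi$ of one of them must be computed by first straightening via Garnir relations in $S^{\lam+(1^n)}$, and it is not immediate that the row-swap heuristic above survives straightening. A cleaner approach, which I would attempt first, is to produce $\Psi$ intrinsically as post-composition with an $\mathfrak{S}_n$-equivariant \emph{sign-twisted column-insertion} map $M^{(m^n)}\to M^{((m+1)^n)}\otimes\sgn$; if such a map can be written down directly and shown equivariant on the spanning set of all $\lam$-tableaux of type $(m^n)$, the conjecture follows without any case-by-case straightening.
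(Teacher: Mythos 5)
This is stated in the paper as an open conjecture (Section~\ref{conjectures}), generalising Newell's result~\ref{Newell_statement}; the paper offers no proof, so there is no argument to compare yours against. Your reformulation is nonetheless sound: $M^{(m^n)}$ decomposes as $\bigoplus_{\nu\vdash n} H^{(m^n)}_\nu\boxtimes S^\nu$ under the commuting $\mathfrak{S}_{mn}\times\mathfrak{S}_n$-actions, so the conjecture is equivalent to a $\mathfrak{S}_n$-isomorphism $W\cong W'\otimes\sgn$, and, granting $\ell(\lam)\le n$ (which you correctly argue may be assumed, since $s_\nu\circ s_{(m)}$ is a summand of $h_m^n$ and every constituent of $h_m^n$ has at most $n$ rows), the column-prepending map $\Phi$ really is a bijection $\mathcal{T}_0\big(\lam,(m^n)\big)\to\mathcal{T}_0\big(\lam+(1^n),((m+1)^n)\big)$, so $\dim W=\dim W'$.

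The gap you flag at the end is, however, a genuine obstruction and not a formality. The identity you reduce to, $\theta_{\widetilde{T*\sigma}}=\sgn(\sigma)\,\theta_{\tilde T*\sigma}$ in $\Hom_{\mathbb{C}\mathfrak{S}_{(m+1)n}}\big(S^{\lam+(1^n)},M^{((m+1)^n)}\big)$, is false in general: sorting a non-column-standard column is a Garnir move, not a sign, and it produces correction terms. Concretely, take $m=1$, $n=3$, $\lam=(2,1)$, $T=\young(13,2)$, $\sigma=(1\;2)$. Then $\widetilde{T*\sigma}=\young(123,21,3)$ and $\tilde T*\sigma=\young(223,11,3)$, and with $\tilde t=\young(123,45,6)$ a direct calculation (keeping only row-equivalent tableaux without column repeats and then pairing with $\kappa_{\tilde t}$ via the column transposition $(1\;4)$) gives
\begin{equation*}
\theta_{\young(123,21,3)}\big|_{S^{(3,2,1)}}\;=\;-\,\theta_{\young(223,11,3)}\big|_{S^{(3,2,1)}}\;-\;\theta_{A}\big|_{S^{(3,2,1)}},\qquad A=\young(113,22,3),
\end{equation*}
and $\theta_A\ne0$ since $A$ is semistandard. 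What you actually need is that the linear extension $\Psi$ from the semistandard basis satisfies $\Psi(\theta_{T*\sigma})=\sgn(\sigma)\theta_{\tilde T*\sigma}$ after straightening $\theta_{T*\sigma}$ into that basis; this is a different, stronger statement, equivalent to the assignment $\theta_U\mapsto\theta_{\tilde U}$ on arbitrary $U\in\mathcal{T}\big(\lam,(m^n)\big)$ respecting, up to the sign twist, every linear relation among the $\theta_U|_{S^\lam}$. Your closing suggestion of an intrinsic, equivariant, sign-twisted insertion map is exactly the right thing to aim for, since it would make well-definedness automatic; but no such map is constructed, and producing one (or otherwise controlling the straightening) is precisely the missing content. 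As it stands, the proposal identifies a promising framework and its own fatal gap, but does not prove the conjecture.
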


The following conjecture is a consequence of Conjecture~\ref{conj2}.
\begin{conj}If $\lam\vdash mn$, then $p^\lam_{\nu,(m)}=p^{\lam+(2^n)}_{\nu,(m+2)}$ for any $\nu\vdash n$.\label{conj3}
\end{conj}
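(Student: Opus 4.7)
The plan is to derive Conjecture~\ref{conj3} by applying Conjecture~\ref{conj2} twice, using the fact that conjugation of partitions is an involution. The statement in the excerpt already signals this: since Conjecture~\ref{conj2} exchanges $(m,\nu)$ with $(m+1,\nu')$ at the cost of adding $(1^n)$ to $\lam$, iterating it ought to return to the original partition $\nu$, increase $m$ by $2$, and add $(2^n)$ to $\lam$.

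More concretely, I would proceed as follows. First, assuming Conjecture~\ref{conj2}, apply it to the triple $(\lam,\nu,m)$ with $\lam\vdash mn$ to obtain
\begin{equation*}
p^\lam_{\nu,(m)}=p^{\lam+(1^n)}_{\nu',(m+1)}.
\end{equation*}
Then set $\mu:=\lam+(1^n)$, which is a partition of $(m+1)n$, and note that $\nu'$ is a partition of $n$. Applying Conjecture~\ref{conj2} a second time, now to the triple $(\mu,\nu',m+1)$, yields
\begin{equation*}
p^\mu_{\nu',(m+1)}=p^{\mu+(1^n)}_{(\nu')',(m+2)}.
\end{equation*}
Finally, use $(\nu')'=\nu$ and $\mu+(1^n)=\lam+(2^n)$ and chain the two equalities to conclude that
\begin{equation*}
p^\lam_{\nu,(m)}=p^{\lam+(2^n)}_{\nu,(m+2)},
\end{equation*}
as required.

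There is no substantive obstacle beyond verifying the arithmetic of partition addition and the involutivity of conjugation; the whole content of Conjecture~\ref{conj3} is extracted from two applications of Conjecture~\ref{conj2}. The only thing worth being careful about is ensuring the hypotheses of Conjecture~\ref{conj2} are met at the second step, namely that $\mu=\lam+(1^n)$ has size $(m+1)n$ and that $\nu'$ is a partition of $n$, both of which are immediate. Consequently, any future proof of Conjecture~\ref{conj2} (for instance, a representation-theoretic argument along the lines of Sections~\ref{row_m_thm_proof} and~\ref{row_am_thm_proof}, constructing explicit semistandard homomorphisms between the relevant twisted Foulkes modules) will automatically deliver Conjecture~\ref{conj3}.
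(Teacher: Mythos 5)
Your derivation is correct and is exactly what the paper intends: the paper simply remarks that Conjecture~\ref{conj3} ``is a consequence of Conjecture~\ref{conj2},'' and your two-step iteration of Conjecture~\ref{conj2} (using $(\nu')'=\nu$ and $\lam+(1^n)+(1^n)=\lam+(2^n)$) is the evident way to realise that remark. The hypothesis checks you carry out at the second step are the only nontrivial points, and you handle them correctly.
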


\section*{Acknowledgements}
The author has been supported by the Engineering and Physical Sciences Research Council (grant number EP/P505577/1), and the School of Mathematics, Statistics and Actuarial Science, University of Kent.

\bibliographystyle{plain}
\bibliography{mjdeb_sstd_homom_paper_v4}

\end{document}